\newtheorem{theorem}{Theorem}[section]
\newtheorem{lemma}[theorem]{Lemma}
\theoremstyle{definition}
\newtheorem*{xrem}{Remark}
\numberwithin{equation}{section}
\begin{document}

%%%%% To ease editing, for IMPAN journals add:

\baselineskip=17pt

%%%%%%%%%%%%%%%%

\title[Sample paper]{Least zero of a cubic form}

\author[Y. X. Xiao]{Yixiu Xiao}
%%%%%\address{Institute of Mathematics\\ Polish Academy of Sciences\\
%%%%%     \'Sniadeckich 8\\
%%%%%     00-656 Warszawa, Poland}
%%%%
\address{School of Mathematical Sciences\\ Shanghai Jiao Tong University\\
800 Dongchuan RD\\
200240 Shanghai, China}
\email{asaka1312@sjtu.edu.cn}

\author[H. Z. Li]{Hongze Li}
\address{School of Mathematical Sciences\\ Shanghai Jiao Tong University\\
800 Dongchuan RD\\
200240 Shanghai, China}
\email{lihz@sjtu.edu.cn}

\date{}

\begin{abstract}
An explicit upper bound is established for the least non-trivial integer zero of an arbitrary cubic form $C \in \mathbb{Z}[X_1,...,X_n],$ provided that $n \geq 14.$
\end{abstract}

\subjclass[2020]{11D72; 11D25; 11P55}

\keywords{cubic forms, least non-trivial zero, circle method}

\maketitle

\section{Introduction}
Let $n \geq 3$ and $F \in \mathbb{Z}[X_1,...,X_n]$ be an indefinite cubic form, with coefficients of maximum modulus $\left\| F\right\|$ and greatest common divisor $1$. It is very natural to determine whether or not the equation $F = 0$ is soluble in integers. When there
is a non-zero solution $\boldsymbol{x} = (x_1,...,x_n) \in \mathbb{Z}^n$ to the equation $F = 0,$ let $\Lambda_n (F)$ denote the smallest positive integer $A$ such that there exists a non-zero solution
$\boldsymbol{x} = (x_1,...,x_n) \in \mathbb{Z}^n$ to the equation $F = 0$ with $\max_{1 \leq i \leq n}|x_i| \leq A.$ For the
history of this problem, we refer the readers to \cite{browning2009least}.

The best known result is due to Browning, Dietmann and Elliott \cite{browning2009least}, in which they obtain a good upper bound for $\Lambda_n (C)$ when $n \geq 17.$ They show that for a cubic form $C \in \mathbb{Z}[X_1,...,X_n]$ with $n \geq 17,$ defining a hypersurface with at most isolated ordinary singularities, then for any $\varepsilon >0,$ one has
\begin{equation*}
        \Lambda_n(C) \leq c_{n, \varepsilon} \left\| C\right\|^{e_1(n)+\varepsilon},
\end{equation*}
where $c_{n, \varepsilon} > 0$ is some constant, and
\begin{equation*}
        e_1(n)=
        \begin{cases}
        \frac{22n^3+107n^2-597n-432}{(n-2)(n-9)(n-16)}, \quad &\text{if }17 \leq n \leq 20,\\
        \frac{n^4+125n^3+1518n^2-7236n-4320}{32(n-2)(n-9)}, \quad &\text{if }n > 20.
    \end{cases}
    \end{equation*}
Especially one has
    \begin{equation*}
        \Lambda_{17}(C) \leq c \left\| C\right\|^{1071}.
    \end{equation*}
And for large $n,$ one has $e_1(n) \sim \frac{n^2}{32}. $

For arbitrary cubic form $C \in \mathbb{Z}[X_1,...,X_n]$ with $n \geq 17,$ they also prove that
\begin{equation*}
        \Lambda_n(C) \leq c \left\| C\right\|^{360000},
\end{equation*}
for some absolute constant $c>0.$

In this paper, we consider the cubic form in $n \geq 14$ variables, and give the following result.

%\begin{xrem}
%Noting that $e_2(16) \approx 304.23,$ and when $n \geq 16$, $e_2(n)=158+\frac{1074}{n-2}+\frac{6813}{(n-2)(n-9)}$,  so
% $e_2(n)$ is a monotonically decreasing function of $n$ with $n \geq 14$. 
%\end{xrem}

\begin{theorem}

    \label{theorem_normal}
    Let $C \in \mathbb{Z}[X_1,...,X_n]$ be a cubic form, with $n \geq 14.$ Then there exist some constant $c_{n, \varepsilon} >0$ such that
    \begin{equation*}
        \Lambda_n(C) \leq c_{n, \varepsilon} \left\| C\right\|^{e_2(n)+\varepsilon},
    \end{equation*}
    where the exponents $e_2(n)$ are given by the Table \ref{tab1}.
    
    \begin{table}[htbp]%调节图片位置，h：浮动；t：顶部；b:底部；p：当前位置
        \begin{center}
        \caption{The values of $e_2(n)$ for $n \geq 14$}\label{tab1}\vspace{-2mm}
        \end{center}
  
    	\centering
    	%\caption{Relative running time of the considered filters}
    	%\label{tab:1}  
    	\begin{tabular}{l|c|c|c|c|c|}
    		\hline\hline\noalign{\smallskip}	
    		$n$ & $14$ & $15$ & $16$ & $17$ & $\geq 18$\\
    		\noalign{\smallskip}\hline\noalign{\smallskip}
    		$e_2(n)$ & $138500$ & $87844$ & $74851$ & $71400$ & $70932$\\ 
    		\noalign{\smallskip}\hline
    	\end{tabular}
    \end{table}
    
\end{theorem}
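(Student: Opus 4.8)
The proof follows the Hardy–Littlewood circle method in the tradition of Davenport and Heath-Brown, organised around the dichotomy furnished by the $h$-invariant and made completely explicit in $\|C\|$ in the style of Browning–Dietmann–Elliott. Write $h=h(C)$ for the least integer $h$ admitting a representation $C=\sum_{i=1}^{h}L_iQ_i$ with $L_i\in\mathbb{Z}[X_1,\dots,X_n]$ linear and $Q_i\in\mathbb{Z}[X_1,\dots,X_n]$ quadratic. The plan is to handle separately the regime in which $h$ is small — where a nontrivial zero sits on a rational linear subspace and is located by geometry of numbers — and the regime in which $h$ is large — where the circle method delivers an asymptotic count of zeros in an expanding box — and then to optimise the position of the cut-off together with a descent that links the two regimes.

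For the small-$h$ regime I would first recall the standard reduction (Davenport; cf.\ W.\,M.\ Schmidt) that the forms $L_1,\dots,L_h$ realising a minimal representation can be taken of height $\ll_n\|C\|^{\kappa(h)}$ with $\kappa(h)$ explicit; their common zero locus is then a rational subspace $V\subseteq\mathbb{Q}^n$ of dimension at least $n-h$ on which $C$ vanishes identically, and a standard geometry-of-numbers argument (Minkowski's theorem, or Siegel's lemma) yields a nonzero point of $V\cap\mathbb{Z}^n$ of height $\ll\|C\|^{\kappa(h)h/(n-h)}$, which is a zero of $C$. This is efficient only when $h$ is small compared with $n$; to push the range of $n$ downwards one interleaves a descent, using a bounded-height linear substitution to replace $C$ by a cubic form in fewer variables (for instance by specialising variables so as to annihilate some of the $L_i$, or by restricting $C$ to a well-chosen subspace) and iterating. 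Each descent step trades a drop in the number of variables for a controlled worsening of the exponent, and the exact bookkeeping of how far to descend and where to switch to the circle method is precisely what fixes the entries of Table \ref{tab1} for $n=14,15,16,17$.

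For the large-$h$ regime I would run an explicit version of Heath-Brown's treatment of cubic forms in fourteen variables. Let $N(P)$ count the $\boldsymbol{x}\in\mathbb{Z}^n$ with $C(\boldsymbol{x})=0$ and $\max_i|x_i|\le P$. Dissecting the unit interval into major and minor arcs in the usual way, the goal is an estimate
\[
  N(P)=\mathfrak{S}\,\mathfrak{I}\,P^{\,n-3}+O\!\bigl(\|C\|^{A}P^{\,n-3-\delta}\bigr)
\]
with $A\ge 0$ and $\delta>0$ explicit, where $\mathfrak{S}$ is the singular series and $\mathfrak{I}$ the singular integral. Two ingredients are essential: the minor-arc contribution must be bounded by an explicit power of $\|C\|$ times $P^{\,n-3-\delta}$, which, when $h$ is as small as the target threshold, requires Heath-Brown's more efficient van der Corput / second-moment treatment of the cubic exponential sum rather than Weyl differencing; and the product $\mathfrak{S}\,\mathfrak{I}$ must be bounded below by an explicit negative power of $\|C\|$, which uses the positivity of the real and $p$-adic densities once the (suitably reduced) form has enough variables, together with a Lang–Weil/Deligne-type bound controlling $\mathfrak{S}$. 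Choosing $P=c_{n,\varepsilon}\|C\|^{e_2(n)+\varepsilon}$ with the exponent large enough to dominate $A$ and the density losses forces $N(P)>0$ and produces the claimed zero; for $n\ge 18$ there is enough slack to apply this step almost directly, which is why $e_2(n)$ stabilises at $70932$.

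The principal obstacle is the circle-method regime: one must extract from Heath-Brown's existence proof in fourteen variables a minor-arc bound and a singular-series lower bound that are \emph{explicit and polynomial in $\|C\|$}, with exponents simultaneously good enough to reach $n=14$ and to improve the $n\ge 17$ bounds of Browning–Dietmann–Elliott. A secondary but genuinely delicate point is controlling the inflation of heights through the chain of linear substitutions in the descent, so that the geometric estimates of the small-$h$ regime mesh cleanly with the analytic ones; it is the interplay of these two inputs that yields the numerical optimisation recorded in the table.
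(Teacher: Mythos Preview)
Your organising dichotomy is not the one the paper uses, and the difference matters for the numerics. The paper never invokes the $h$-invariant. Instead it works with the notion of $C$ being \emph{$\vartheta$-good}, meaning that for every $H\le M^{\vartheta}$ and every $r$ one has
\[
\#\{\boldsymbol{x}\in\mathbb{Z}^n:|\boldsymbol{x}|\le H,\ \operatorname{rank} M(\boldsymbol{x})=r\}\le H^{r+\varepsilon},
\]
where $M(\boldsymbol{x})_{jk}=\sum_i c_{ijk}x_i$. The dichotomy is Lemma~2.1 (Proposition~1 of Browning--Dietmann--Elliott): either $C$ is $\vartheta$-good, or already
\[
\Lambda_n(C)\ll M^{(n^2-1)/2+\vartheta n(n-1)/2}.
\]
In the $\vartheta$-good case the circle method (with Heath-Brown's van der Corput/averaging treatment on the minor arcs and an explicit lower bound for the truncated singular series) is run with $P=M^{e(n)}$; the various constraints on $e(n)$ and $\vartheta$ are catalogued, the binding one being~(2.20), and one solves for the smallest admissible $\vartheta$. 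The table entries are then literally $e_2(n)=\lceil (n^2-1)/2+\vartheta n(n-1)/2\rceil$: the exponent comes from the \emph{failure} branch of the dichotomy, not from the circle-method branch. There is no descent or iteration for $14\le n\le 18$; for $n>18$ one simply zeroes out $n-18$ variables.

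Relative to this, your plan has a genuine gap in the small-$h$ branch. The assertion that a minimal representation $C=\sum_{i=1}^h L_iQ_i$ can be taken with $\|L_i\|\ll_n\|C\|^{\kappa(h)}$ for some explicit $\kappa(h)$ is not a standard fact, and indeed controlling the heights of the $L_i$ (as opposed to merely their existence) is exactly the difficulty that the $\vartheta$-good formulation sidesteps: Lemma~2.1 delivers a polynomial bound in $\|C\|$ directly from a counting failure, with no need to exhibit small linear forms. Without that height control your geometry-of-numbers step does not give an explicit exponent at all, and the subsequent ``descent with bookkeeping'' has nothing to optimise against. Even granting a version of the height control, there is no reason the resulting exponents would reproduce the specific values in Table~1, which are tied to the shape $(n^2-1)/2+\vartheta n(n-1)/2$ and to the particular constraint~(2.20) governing $\vartheta$. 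Your large-$h$ side (explicit Heath-Brown minor arcs, explicit $\mathfrak{S}$ lower bound) is closer in spirit to what the paper does, but it is conditioned on the wrong hypothesis: Heath-Brown's method inputs the rank-counting condition, not a lower bound on $h$.
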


After the first version of this paper, Professor T.D.Browning mention the work \cite{bernert2023} of C.Bernert to us. Since the lower bound for singular series in Lemma 11 of \cite{browning2009least} is incorrect, we cited the first part of Lemma 16 of Bernert as  Lemma 4.1. After the modification, our final result of $n =14$ seems to be worse than that of C.Bernert, but there may be an error in his article. Specifically, in his Theorem 4, his selection of $P_0 $ did not satisfy the condition $(\mathfrak {S}_4)$ in p30, this means that the bound is incorrect in his Theorem 4. And more, we give the upper bound for all
$n\geq 14$.

For the notations in this paper, we shall use the symbols $c$ or $c_{n, \varepsilon}$ to denote various positive real constants, not necessarily the same at each occurrence. All of the implied constants in our work will be allowed to depend on $\varepsilon$ and $n,$ with any further dependence being made completely explicit. We will allow $\varepsilon$ to take different values at different parts of the argument, but we shall always assume that it is very small. Throughout the remainder of the paper, we denote
\begin{equation*}
    M=\left\| C\right\|,
\end{equation*}
as the height of the cubic form $C,$ and $|\boldsymbol{x}|$ as the norm $\max_{1 \leq i \leq n}|x_i|$ of any vector $\boldsymbol{x} \in \mathbb{R}^n.$ We adopt the notation
\begin{equation*}
    \left\| \alpha \right\| = \min_{n \in \mathbb{Z}}|\alpha-n|
\end{equation*}
to denote the distance of a real number $\alpha$ from integers. $\lfloor x \rfloor$ and $\lceil x \rceil$ denote the greatest integer $\leq x$ and the least integer $\geq x,$ respectively.
Finally, we will assume that the parameter $M$ is a sufficiently large integer, and $\mathcal{L}=\log M.$

\section{The Outline of the Proof of the Theorems}

For the proof of the theorem, we follow the approach in \cite{browning2009least}. 
%For the processing of the major arcs and the truncated singular series, we use the methods of \cite{browning2009least} and \cite{bernert2023}. 
And we use the method of \cite{heath2007cubic} to get explicit upper bound for the minor arcs.

We write our cubic form as
\begin{equation}
    \label{C}
    C(x_1,...,x_n)=\sum_{i,j,k}c_{ijk}x_ix_jx_k,
\end{equation}
in which the coefficients $c_{ijk} \in \mathbb{Z}$ are symmetric in the indices $i,j,k$. Without loss of generality, we may assume that 
\begin{equation}
    \label{c111}
    c_{111} \gg M,
\end{equation}
for an absolute implied constant.

We define an $n \times n$ matrix $M(\boldsymbol{x})$ as follows
\begin{equation}
    \label{M(x)}
    M(\boldsymbol{x})_{jk}=\sum_{1 \leq i \leq n}c_{ijk}x_i.
\end{equation}
And let $r(\boldsymbol{x})=\text{rank}(M(\boldsymbol{x})),$ if for any $\varepsilon>0,$ the following
\begin{equation}
    \label{good}
    \#\{\boldsymbol{x} \in \mathbb{Z}^{n}:|\boldsymbol{x}| \leq H, r(\boldsymbol{x})=r\} \leq H^{r+\varepsilon}
\end{equation}
holds for each $0 \leq r \leq n$ and any $H$ in $1 \leq H \leq M^{\vartheta},$ we will call the cubic form $C$ $\vartheta$-good for $1 \leq \vartheta \leq \infty.$

\begin{lemma}
\label{lemma_when_not_good}
Let $n \geq 3$ and $\vartheta \geq 1$. Then either $C$ is $\vartheta$-good or
\begin{equation}
    \label{when_not_good}
    \Lambda_{n}(C) \ll M^{\frac{n^2-1}{2}+\frac{\vartheta n(n-1)}{2}}.
\end{equation}
\end{lemma}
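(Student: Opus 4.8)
The plan is to establish the contrapositive: assuming $C$ fails to be $\vartheta$-good, I construct a small non-trivial zero. So suppose \eqref{good} fails, i.e.\ for some $r\in\{0,1,\dots,n\}$ and some $H$ with $1\le H\le M^{\vartheta}$ the set $\mathcal{S}:=\{\boldsymbol{x}\in\mathbb{Z}^{n}:|\boldsymbol{x}|\le H,\ r(\boldsymbol{x})=r\}$ has $\#\mathcal{S}>H^{r+\varepsilon}$. The value $r=n$ is trivial (the bound $\#\mathcal{S}\le(2H+1)^{n}$), so $0\le r\le n-1$ and every $\boldsymbol{x}\in\mathcal{S}$ has $\dim\ker M(\boldsymbol{x})\ge n-r\ge 1$. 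The goal is a nonzero $\boldsymbol{z}\in\mathbb{Z}^{n}$ with $C(\boldsymbol{z})=0$ and $|\boldsymbol{z}|\ll M^{(n^{2}-1)/2+\vartheta n(n-1)/2}$.

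First I would set up the algebra. Let $\Phi$ be the symmetric trilinear form polarising $C$; then $M(\boldsymbol{x})_{jk}=\Phi(\boldsymbol{x},\boldsymbol{e}_{j},\boldsymbol{e}_{k})$, so $M(\boldsymbol{x})=\tfrac16$ times the Hessian of $C$ at $\boldsymbol{x}$ and
\[
C(\boldsymbol{x})=\boldsymbol{x}^{T}M(\boldsymbol{x})\boldsymbol{x},\qquad \nabla C(\boldsymbol{x})=3\,M(\boldsymbol{x})\boldsymbol{x}.
\]
Moreover $M(\boldsymbol{x})\boldsymbol{v}=\boldsymbol{0}$ is the symmetric condition $\Phi(\boldsymbol{x},\,\cdot\,,\boldsymbol{v})\equiv0$, hence equivalent to $M(\boldsymbol{v})\boldsymbol{x}=\boldsymbol{0}$, and it forces $\Phi(\boldsymbol{x},\boldsymbol{x},\boldsymbol{v})=\Phi(\boldsymbol{x},\boldsymbol{v},\boldsymbol{v})=0$, so that
\[
C(s\boldsymbol{x}+t\boldsymbol{v})=s^{3}C(\boldsymbol{x})+t^{3}C(\boldsymbol{v})\qquad\text{for }\boldsymbol{v}\in\ker M(\boldsymbol{x}),\ s,t\in\mathbb{R}.
\]
Two cases now fall away. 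If $C(\boldsymbol{x})=0$ for some $\boldsymbol{x}\in\mathcal{S}$, then $|\boldsymbol{x}|\le H\le M^{\vartheta}$ already lies inside the target. If $\ker M(\cdot)\neq\{\boldsymbol{0}\}$ --- equivalently $C$ is a cone, i.e.\ $M(\boldsymbol{x}_{0})=\boldsymbol{0}$ for some $\boldsymbol{x}_{0}\neq\boldsymbol{0}$ --- then $\nabla C(\boldsymbol{x}_{0})=\boldsymbol{0}$, hence $C(\boldsymbol{x}_{0})=0$, and as $M(\boldsymbol{x}_{0})=\boldsymbol{0}$ is a linear system of $\tfrac12 n(n+1)$ equations with integer coefficients $\ll M$ and rank $\le n-1$, Siegel's lemma gives such an $\boldsymbol{x}_{0}$ with $|\boldsymbol{x}_{0}|\ll M^{\,n-1}$, again inside the target. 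So from here on I assume $C$ does not vanish on $\mathcal{S}$ and $\boldsymbol{x}\mapsto M(\boldsymbol{x})$ is injective.

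For the main case I would argue as follows. Pigeonholing over the $\ll_{n}1$ choices of an $r\times r$ submatrix, pass to $\mathcal{S}'\subseteq\mathcal{S}$ with $\#\mathcal{S}'\gg_{n}H^{r+\varepsilon}$ on which a fixed $r\times r$ minor of $M(\boldsymbol{x})$ is nonzero; Cramer's rule then supplies, for each $\boldsymbol{x}\in\mathcal{S}'$, a basis of $\ker M(\boldsymbol{x})$ by integer vectors whose entries are degree-$r$ polynomials in $\boldsymbol{x}$ with coefficients $\ll_{n}M^{r}$, hence of size $\ll_{n}(MH)^{r}$. Now consider the kernel map $\boldsymbol{x}\mapsto\ker M(\boldsymbol{x})$ on $\mathcal{S}'$. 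If it is essentially constant, some fibre $L_{K}=\{\boldsymbol{x}:M(\boldsymbol{x})|_{K}=\boldsymbol{0}\}$ --- itself a linear subspace, the conditions $M(\boldsymbol{x})\boldsymbol{k}=\boldsymbol{0}$ $(\boldsymbol{k}\in K)$ being linear in $\boldsymbol{x}$ --- contains $\gg_{n}H^{r+\varepsilon/2}$ points of $\mathcal{S}'$; since an $m$-dimensional rational subspace meets $\{|\boldsymbol{x}|\le H\}$ in only $\ll_{n}H^{m}$ integer points, this forces $\dim L_{K}\ge r+1$, and in the favourable subcase $L_{K}=\mathbb{R}^{n}$ one gets $\Phi(\,\cdot\,,\boldsymbol{k},\boldsymbol{k}')\equiv0$ for all $\boldsymbol{k},\boldsymbol{k}'\in K$, i.e.\ $C$ vanishes identically on the rational subspace $K$, which is cut out by $\le n-1$ integral linear equations whose heights are bounded by an explicit power of $M$ and $H\le M^{\vartheta}$; a concluding Siegel's lemma then produces a nonzero $\boldsymbol{z}\in K\cap\mathbb{Z}^{n}$, necessarily a zero of $C$, of the desired size. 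The other subcases --- $L_{K}$ a proper subspace, or the kernel map with large image --- I would dispatch by induction on $n$, restricting $C$ to a suitable lower-dimensional rational subspace on which the same dichotomy recurs, while carefully tallying how the coefficient heights grow (the factor $H\le M^{\vartheta}$ entering essentially $r$ times) so that the accumulated exponent is exactly $\tfrac12(n^{2}-1)+\tfrac12\vartheta n(n-1)$.

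The hard part will be this combinatorial/geometric core: converting ``the Hessian-rank-$\le r$ locus carries more than $H^{r+\varepsilon}$ lattice points of height $\le H$'' into ``$C$ vanishes on a rational linear subspace of explicitly controlled height'', uniformly over $0\le r\le n-1$ and with every estimate kept effective. This is a quantitative, height-aware form of the structure theory of cubic forms with everywhere-degenerate Hessian --- where the Gordan--Noether/Perazzo phenomenon shows that such forms need not be cones --- and, since a naive execution of the scheme above (full Cramer bases, wedge products to pass to defining equations) overshoots the stated exponent, a further difficulty is to organise the construction efficiently enough that the final bound lands on $\tfrac12(n^{2}-1)+\tfrac12\vartheta n(n-1)$ rather than something larger.
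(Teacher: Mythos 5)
The paper does not prove this lemma at all: it is quoted verbatim as Proposition~1 of Browning--Dietmann--Elliott, whose proof is a quantitative version of Davenport's Lemma~3 from \emph{Cubic forms in sixteen variables}. Your outline does correctly identify the right general shape of that argument --- failure of \eqref{good} forces a structural degeneracy of the Hessian matrices $M(\boldsymbol{x})$, which is converted into a rational zero of controlled height via the splitting identity $C(s\boldsymbol{x}+t\boldsymbol{v})=s^{3}C(\boldsymbol{x})+t^{3}C(\boldsymbol{v})$ for $\boldsymbol{v}\in\ker M(\boldsymbol{x})$ and a concluding application of Siegel's lemma --- and your algebraic preliminaries ($C(\boldsymbol{x})=\boldsymbol{x}^{T}M(\boldsymbol{x})\boldsymbol{x}$, the symmetry $M(\boldsymbol{x})\boldsymbol{v}=\boldsymbol{0}\Leftrightarrow M(\boldsymbol{v})\boldsymbol{x}=\boldsymbol{0}$, the cone case) are correct.

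However, there is a genuine gap, and you name it yourself: the entire content of the lemma lives in the step you defer as ``the hard part'', namely converting $\#\{|\boldsymbol{x}|\le H,\ r(\boldsymbol{x})=r\}>H^{r+\varepsilon}$ into a zero of height $\ll M^{(n^{2}-1)/2}H^{n(n-1)/2}$, and you concede that your scheme as written overshoots the stated exponent. Beyond being unexecuted, the sketch has concrete obstructions. The dichotomy ``kernel map essentially constant versus large image'' is not a real dichotomy: the number of candidate kernels is of order $(MH)^{O(n^{2})}$, so a naive pigeonhole over kernels applied to only $\gg H^{r+\varepsilon}$ points yields nothing, and the correct argument instead counts pairs $(\boldsymbol{x},\boldsymbol{y})$ with $M(\boldsymbol{x})\boldsymbol{y}=\boldsymbol{0}$ using the determinant and successive minima of the kernel lattices together with the symmetry in $\boldsymbol{x}$ and $\boldsymbol{y}$ --- a mechanism absent from your plan. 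The proposed ``induction on $n$'' for the remaining subcases is not formulated (no inductive hypothesis, no control of how $\|C|_{U}\|$ grows under restriction to a subspace of bounded height), and the subcase $\dim L_{K}\ge r+1$ with $L_{K}\ne\mathbb{R}^{n}$ is not resolved. Finally, the exponent $\tfrac12(n^{2}-1)+\tfrac12\vartheta n(n-1)$ is never derived; since producing exactly this bound is the point of the quantitative statement, the proposal cannot be regarded as a proof. To repair it you should follow Davenport's pair-counting argument in the explicit form given in the proof of Proposition~1 of the cited paper of Browning, Dietmann and Elliott.
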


This is the Proposition 1 of \cite{browning2009least}.

For a $\vartheta$-good cubic form $C$, we will give an upper bound for $\Lambda_{n}(C)$ by circle method. For a given $\boldsymbol{z} \in \mathbb{R}^n$ and $0 < \rho < 1$, let
\begin{equation*}
    \mathscr{B}=\mathscr{B}(\boldsymbol{z},\rho) =\prod_{i=1}^{n}[z_i-\rho,z_i+\rho].
\end{equation*}

From now on we assume that
\begin{equation}
    \label{rho,z}
    \rho \asymp M^{-2-\frac{5}{n-2}},\quad |\boldsymbol {z}| \ll M^{\frac{1}{n-2}},
\end{equation}
where the choices of $\rho, \boldsymbol {z}$ come from Lemma \ref{choice}.

We define the generating function
\begin{equation*}
    S(\alpha) = \sum_{\boldsymbol{x}\in \mathbb{Z}^{n}\cap P\mathscr{B}} e(\alpha C(\boldsymbol{x})),
\end{equation*}
where $e(\alpha) := \exp(2\pi i\alpha)$ as usual. We will always assume that $ P \geq 1,$ so that the above sum is non-trivial.

Our starting point is
 \begin{equation}
    \label{N(P)dingyi}
     N(P)=N_{\mathscr{B}}(P,C)=\sum_{\substack{\boldsymbol {x}\in \mathbb{Z}^{n}\cap P\mathscr{B}\\[3pt]C(\boldsymbol {x})=0}} 1=\int_0^1 S(\alpha) d\alpha. 
 \end{equation}
%Let \textcolor{red}{for infty good, while not enough for normal good}
%\begin{equation} 
 %   \label{P_0_definition}
 %   P_{0,\infty}=M^{\frac{37n^4-213n^3+328n^2-292n+96}{(n-2)(n^3-17n^2+54n-32)}+\varepsilon}.
%\end{equation}
%The selection \eqref{P_0_definition} for $P_{0,\infty}$ comes from \eqref{intermediate_range_left}.
Let %{P_0_another_requirment} is always stronger than {intermediate_range_left}
\begin{equation} 
    \label{P_0_vartheta_definition}
    P_{0} = M^{\frac{3675n\sqrt{3n(3n-35)}}{(\sqrt{3n(3n-35)}-18n+210)(3n-6\sqrt{3n(3n-35)})}+\varepsilon}.
\end{equation}
For simplicity, we sometimes write $P_0=M^{eoP_0}$ in the following argument, where
\begin{equation}
    \label{eop_definition}
    eoP_0=\frac{3675n\sqrt{3n(3n-35)}}{(\sqrt{3n(3n-35)}-18n+210)(3n-6\sqrt{3n(3n-35)})}+\varepsilon.
\end{equation}

The selection \eqref{P_0_vartheta_definition} for $P_0$ comes from \eqref{P_0_another_requirment} and \eqref{intermediate_range_left}. 

We define the major arcs
\begin{equation*}
    \mathfrak{M}=\bigcup\limits_{q\leq P_0}\bigcup_{\substack{1\leq a\leq q\\[3pt](a,q)=1}}\mathfrak{M}(a,q),
\end{equation*}
where
\begin{equation*}
    \mathfrak{M}(a,q)=\left[\frac{a}{q}-T,\frac{a}{q}+T\right],
\end{equation*}
and the minor arcs $\mathfrak{m}=[0,1] \backslash\mathfrak{M}$, defined module 1. The union of major arcs will be disjoint provided that
\begin{equation}
\label{disjoint_preliminary}
    2TP_0^2 < 1.
\end{equation}

Let 
\begin{equation}  %done3
    \label{u_normal_definition}
    T =
    %u_14_15=(11*n^4 + 37*n^3 - 136*n^2 + 212*n - 64)/(n^4 - 19*n^3 + 88*n^2 - 140*n + 64) 
    \begin{cases}
        P^{-3+\varepsilon}M^{320.239}, \quad &\text{if } n =14,\\
        P^{-3+\varepsilon}M^{\frac{4(3n^2-2n+1)}{n-2}}, \quad &\text{if } n \geq 15.\\
    \end{cases}
\end{equation}
\eqref{u_normal_definition} comes from \eqref{u_need_from_major_integral_error_term} and \eqref{u_from_small_denominator_large_distance}.

Let
\begin{equation*}
    P=M^{e(n)}.
\end{equation*}
Substituting in our choice \eqref{u_normal_definition} of $T$ and \eqref{P_0_vartheta_definition} of $P_{0},$ \eqref{disjoint_preliminary} holds for $n \geq 14$ provided 
\begin{equation} %done3
    \label{major_arcs_disjoint_vartheta}
    e(n) \geq
    \begin{cases}
        495.446, \quad &\text{if } n =14,\\
       \frac{4(3n^2-2n+1)}{3(n-2)}+\frac{2eoP_0}{3}, \quad &\text{if } n \geq 15,\\
    \end{cases}
\end{equation}

For any $R \geq 1$, we also define the truncated singular series, given by
\begin{equation}
    \label{singular_series}
    \mathfrak{S}(R)=\sum_{q\leq R}\sum_{\substack{1\leq a\leq q\\[3pt](a,q)=1}} \frac{S(a,q)}{q^{n}}=\sum_{q\leq R}\sum_{\substack{1\leq a\leq q\\[3pt](a,q)=1}} q^{-n}\sum_{\boldsymbol {r}(\text{mod } q)} e_q(aC(\boldsymbol {r})),
\end{equation}
 where $e_q(x)=\exp(\frac{2\pi i x}{q}).$

For the contribution from the major arcs, we have the following Lemma.
\begin{lemma}
    \label{contribution_major}
Let $\varepsilon > 0$ and assume that $n \geq 14,$
\begin{equation}
\label{The child I want to abandon}
    e(n) \geq
    \begin{cases}
    904.479, \quad &\text{if } n =14,\\
    eoP_0+\frac{24n^2-13n+2}{2(n-2)}+\varepsilon, \quad &\text{if }n\geq 15.
    \end{cases}
\end{equation}
Assume that $C$ is $\vartheta$-good, where $\vartheta$ satisfies %done3
\begin{equation}
    \label{good_demand_1}
    e(n)\frac{2(n-3)}{n+4}< 3\vartheta+\frac{29n^2-38n-2}{(n-2)(n+4)}.
\end{equation}
Then there exists a positive constant $\mathfrak{I}$ satisfying $\mathfrak{I} \gg \rho^{n-1}M^{-1-\frac{2}{n-2}}$ such that
\begin{equation*}
    \begin{split}
        \int_{\mathfrak{M}} S(\alpha) d\alpha=
        &\mathfrak{S}(P_0)\mathfrak{I}P^{n-3}\\
        &+O\left(P^{n-1}M^{3eoP_0-\frac{(n-1)(2n+1)}{n-2}}T\right)\\
        &+O\left(P^{n-\frac{9}{2}}M^{\frac{-2n^2+8n+3}{n-2}+\varepsilon}T^{-\frac{1}{2}}\right),
    \end{split}
\end{equation*}
\end{lemma}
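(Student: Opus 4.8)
The plan is to evaluate $\int_{\mathfrak{M}} S(\alpha)\,d\alpha$ by the standard Hardy--Littlewood dissection, approximating $S(\alpha)$ on each arc $\mathfrak{M}(a,q)$ by a product of a complete exponential sum $S(a,q)$ and an oscillatory integral. First I would write $\alpha = a/q + \beta$ with $|\beta| \le T$ on $\mathfrak{M}(a,q)$, split the sum over $\boldsymbol x \in \mathbb{Z}^n \cap P\mathscr{B}$ into residue classes modulo $q$, and expand $C(\boldsymbol x)$ accordingly. This produces the main term $q^{-n} S(a,q)$ times the integral $I(\beta) = \int_{P\mathscr{B}} e(\beta C(\boldsymbol\xi))\,d\boldsymbol\xi$, up to an error coming from replacing the sum over a residue class by the integral. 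The usual device here is partial summation / the Euler--Maclaurin-type bound: the discrepancy between $\sum_{\boldsymbol x \equiv \boldsymbol r} e(\beta C(\boldsymbol x))$ and $q^{-n} I(\beta)$ is controlled by the total variation of the integrand, which brings in $P^{n-1}$ (the boundary term over the $(n-1)$-dimensional faces of $P\mathscr{B}$), the size $|\beta|\,\|\nabla C\| \ll T M P^2$ of the derivative of the phase, and the number of arcs, $\sum_{q\le P_0} q \ll P_0^2 = M^{2eoP_0}$; assembling these gives the first $O$-term $P^{n-1}M^{3eoP_0 - (n-1)(2n+1)/(n-2)}T$ once the constraints \eqref{rho,z} on $\rho$ and $\boldsymbol z$ and the scaling of $M$ are substituted. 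Summing the main contribution over all arcs with $q \le P_0$ yields $\mathfrak{S}(P_0)$ times the singular integral, which I would denote $\mathfrak{I} = \int_{-\infty}^{\infty} I_1(\beta)\,d\beta$ after rescaling $I(\beta) = P^n I_1(P^3\beta)$, extended from $|\beta|\le T$ to $\mathbb{R}$ at the cost of the second $O$-term, and with $P^{n-3}$ factored out so the claimed shape $\mathfrak{S}(P_0)\mathfrak{I}P^{n-3}$ emerges.

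The next step is to control the tail error from extending the $\beta$-integral: I would bound $I_1(\beta)$ by the standard stationary-phase / van der Corput estimate $I_1(\beta) \ll \min(1, |\beta|^{-1})$ on a box, refined to $I_1(\beta) \ll |\beta|^{-3/2}$ or better using that $C$ restricted to the box $\mathscr{B}(\boldsymbol z,\rho)$ has a non-degenerate cubic part in the first variable thanks to \eqref{c111} (that is why the hypothesis $c_{111}\gg M$ and the smallness of $\rho$ in \eqref{rho,z} are in force). Integrating the tail $\int_{|\beta|>T} |\beta|^{-3/2}\,d\beta \ll T^{-1/2}$, multiplying by the rescaling factor $P^n \cdot P^{-3}$ and by the powers of $\rho$ and $|\boldsymbol z|$ coming from the box, and tracking the $M$-exponents gives precisely the third term $O(P^{n-9/2}M^{(-2n^2+8n+3)/(n-2)+\varepsilon}T^{-1/2})$. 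The lower bound $\mathfrak{I} \gg \rho^{n-1}M^{-1-2/(n-2)}$ for the singular integral is a positivity statement: since $C(\boldsymbol z)$ can be made small (of order $\rho$) and $\partial C/\partial x_1 \gg M$ on the box by \eqref{c111}, the real cubic form has a non-singular real zero in the interior of $\mathscr{B}(\boldsymbol z,\rho)$, so $\mathfrak{I}$ is a genuine positive-density singular integral; its size is read off by the implicit-function-theorem parametrization of the zero set, which contributes the volume factor $\rho^{n-1}$ of the $(n-1)$-dimensional slice and the Jacobian factor $M^{-1}$, with the extra $M^{-2/(n-2)}$ absorbing the constraint \eqref{rho,z} relating $\rho$ to $M$ --- this is essentially where Lemma \ref{choice}, cited in \eqref{rho,z}, does its work.

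The hypotheses \eqref{The child I want to abandon}, \eqref{good_demand_1} enter only at the bookkeeping stage: the $\vartheta$-good condition \eqref{good} is what lets us bound the complete sums $S(a,q)$, hence $\mathfrak{S}(P_0)$ and the error estimates, uniformly — the rank stratification of $M(\boldsymbol x)$ controls $|S(a,q)|$ via Weyl differencing on the box, and \eqref{good_demand_1} is exactly the inequality that makes the resulting gain survive after one has summed over $q \le P_0$; the lower bounds on $e(n)$ in \eqref{The child I want to abandon} are the thresholds ensuring $P = M^{e(n)}$ is large enough that the two $O$-terms are genuinely smaller than the main term $\mathfrak{S}(P_0)\mathfrak{I}P^{n-3}$, i.e.\ so that the whole asymptotic is non-vacuous. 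I expect the main obstacle to be the first error term: getting the exponent of $M$ in $P^{n-1}M^{3eoP_0 - (n-1)(2n+1)/(n-2)}T$ correct requires carefully combining the arc count $P_0^2$, the phase-derivative bound involving $M$ and $\rho$, the $(n-1)$-dimensional boundary of $P\mathscr{B}$, and the precise dependence on $|\boldsymbol z| \ll M^{1/(n-2)}$ and $\rho \asymp M^{-2-5/(n-2)}$ from \eqref{rho,z} — any slip in how $\rho$ and $\boldsymbol z$ propagate through the Euler--Maclaurin estimate changes the final exponents $e_2(n)$. A secondary delicate point is making sure the van der Corput / stationary-phase bound $I_1(\beta) \ll |\beta|^{-3/2}$ is uniform over the family of boxes allowed by \eqref{rho,z}, since a weaker tail bound would inflate the $T^{-1/2}$ term and destroy the balance; here one leans on \eqref{c111} to keep the relevant Hessian-type quantity bounded below by a power of $M$ throughout the box.
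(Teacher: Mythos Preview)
Your overall architecture is correct and matches the paper: approximate $S(\alpha)$ on each $\mathfrak{M}(a,q)$ by $q^{-n}S(a,q)I_P(z)$, sum to get $\mathfrak{S}(P_0)$ times a truncated singular integral, extend the integral to $\mathbb{R}$ with a tail bound of order $(TP^3)^{-1/2}$, and absorb the factor $\mathfrak{S}(P_0)\ll M^{2+\varepsilon}$ (obtained from the $S(a,q)$-estimate of Lemma~\ref{lemma_S(a,q)_upper_bound}, which is where \eqref{good_demand_1} is used) into the second $O$-term. The lower bound $\mathfrak{I}\gg \rho^{n-1}M^{-1-2/(n-2)}$ is indeed taken from the real-density argument via Lemma~\ref{choice}.

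There is one genuine misidentification, however. You write that the lower bounds on $e(n)$ in \eqref{The child I want to abandon} are ``the thresholds ensuring \dots\ the two $O$-terms are genuinely smaller than the main term''. That is not their role here; the comparison of error terms against the main term takes place \emph{outside} this lemma, in \eqref{T_upper_bound_from_major_integral}--\eqref{for_the_upper_bound_of_T}. In the paper, \eqref{The child I want to abandon} is exactly the inequality $P_0 T M^{3/2}P^2 \ll 1$ (condition~\eqref{Not very important} of Lemma~\ref{lemma_5_1}), rewritten using $D\ll M^{1/(n-2)}$ from \eqref{rho,z}. This is a \emph{hypothesis} for the Euler--Maclaurin step: one needs $q|z|\,M(DP)^2=o(1)$ so that the phase $zC(\boldsymbol r + q\boldsymbol y)$ varies slowly enough over a single lattice cell that the approximation $S(\alpha)=q^{-n}S(a,q)I_P(z)+O(q(\rho P)^{n-1})$ is valid at all. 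If you omit this condition the error term acquires an extra factor of size $q|z|MD^2P^2$ and the stated first $O$-term is false. So \eqref{The child I want to abandon} must enter \emph{before} you assemble the first error, not afterward as a bookkeeping check.

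A minor arithmetic point: in assembling the first $O$-term you quote $\sum_{q\le P_0} q \ll P_0^2$, but the per-arc error $O(q(\rho P)^{n-1})$ must still be summed over $a$ with $(a,q)=1$ and integrated over $|z|\le T$, giving $T(\rho P)^{n-1}\sum_{q\le P_0} q\,\phi(q)\ll T(\rho P)^{n-1}P_0^{3}$; the exponent $3\,eoP_0$ in the statement comes from $P_0^3$, not $P_0^2$.
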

We will prove this Lemma in section 5.

For the contribution from the minor arcs, we have the following Lemma.
\begin{lemma}
    \label{contribution_minor_normal}
Assume that $n \geq 14,$  
\begin{equation}  %done3
    \label{lemma_3.2_right_Q_1}
    e(n) \geq \frac{4(n+4)}{n^2-7n-72}eoP_0+\frac{77n^3+274n^2-746n+16}{2(n-2)(n^2-7n-72)}+\varepsilon,
\end{equation}
\begin{equation}
    \label{Unexpected pregnancy}
    e(n) \geq \frac{77n^4-221n^3+180n^2+70n-96}{2(n-2)(n^3-16n^2+33n-16)}+\varepsilon.
\end{equation}
Assume that $C$ is $\vartheta$-good, where $\vartheta$ satisfies 
%%%%
\begin{equation} %done3
    \label{minor_requir_vartheta_specific_1}
    3e(n)+eoP_0+\varepsilon \leq \frac{\vartheta(n+8)}{4}-\frac{6n^2-14n+1}{n-2} ,
\end{equation}
%%%%
\begin{equation}  %done3
    \label{good_demand_2}
    e(n)\frac{7n}{n+4}< n\vartheta-\frac{n(12n^2-29n-1)}{(n+4)(n-2)}-\varepsilon,
\end{equation}
%%%%
\begin{equation} %done3
    \label{good_demand_3}
    5e(n) < (n-1)\vartheta-\frac{12n^2-24n-1}{n-2}-\varepsilon,
\end{equation}
%%%%
\begin{equation} %done3
    \label{the_critical_when_general}
    e(n)\frac{n^2-6n+4}{n^2-11n+8}+\varepsilon
    \leq \frac{\vartheta n}{4}-\frac{6n^4-14n^3+13n^2+10n-12}{(n-2)(n^2-11n+8)}.
\end{equation}
Then we have
\begin{equation*}
    \int_{\mathfrak{m}}|S(\alpha)|d\alpha \ll P^{n-3-\varepsilon}M^{\frac{-8n^2+12n+1}{n-2}}.
\end{equation*}
\end{lemma}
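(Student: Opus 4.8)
The plan is to treat the minor arcs by the Davenport--Heath-Brown circle method for cubic forms, as developed in \cite{heath2007cubic} and \cite{browning2009least}, exploiting the $\vartheta$-goodness hypothesis \eqref{good} throughout. Since $\mathfrak{m}\subseteq[0,1]$ has measure at most $1$, it is enough to prove the pointwise bound $|S(\alpha)|\ll P^{n-3-\varepsilon}M^{(-8n^{2}+12n+1)/(n-2)}$ for every $\alpha\in\mathfrak{m}$. For each such $\alpha$ I would fix, by Dirichlet's theorem, coprime $a,q$ with $q\le Q$ and $|\alpha-a/q|\le(qQ)^{-1}$ for a suitable parameter $Q$ with $P_{0}<Q<T^{-1}$; since $\alpha\notin\mathfrak{M}$ one is then in one of the two regimes
\[
\text{(I)}\ \ q>P_{0},\qquad\text{or}\qquad\text{(II)}\ \ q\le P_{0}\ \text{ and }\ |\alpha-a/q|>T.
\]
The $\vartheta$-free hypotheses \eqref{lemma_3.2_right_Q_1} and \eqref{Unexpected pregnancy} on $e(n)$ are what make such a choice of $Q$ consistent with the sizes of $P_{0}$ and $T$ given by \eqref{P_0_vartheta_definition} and \eqref{u_normal_definition}, and with the disjointness condition $2TP_{0}^{2}<1$.

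First I would apply two Weyl differencings to $S(\alpha)$. Using the symmetry of the $c_{ijk}$ in \eqref{C}, the second difference of $C$ is linear in the summation variable with coefficient vector a constant multiple of $M(\boldsymbol h)\boldsymbol k$, where $M(\cdot)$ is the matrix \eqref{M(x)} and $\boldsymbol h,\boldsymbol k$ are the difference vectors; crucially, since the box $P\mathscr{B}=P\mathscr{B}(\boldsymbol z,\rho)$ of \eqref{rho,z} has side length only $\asymp\rho P$, both $\boldsymbol h$ and $\boldsymbol k$ are confined to a box of that size. This gives the standard inequality
\[
|S(\alpha)|^{4}\ll(\rho P)^{3n+\varepsilon}+(\rho P)^{2n+\varepsilon}\,N(\alpha),
\]
where
\[
N(\alpha)=\#\Bigl\{(\boldsymbol h,\boldsymbol k)\in\mathbb{Z}^{2n}:|\boldsymbol h|,|\boldsymbol k|\le\rho P,\ \bigl\|c\,\alpha\,(M(\boldsymbol h)\boldsymbol k)_{j}\bigr\|\le(\rho P)^{-1}\ (1\le j\le n)\Bigr\}
\]
for an absolute constant $c$. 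The pure power of $\rho P$ is easily absorbed, so the whole problem is to bound $N(\alpha)$ economically.

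To estimate $N(\alpha)$ I would stratify the $\boldsymbol h$-sum by the rank $r=r(\boldsymbol h)=\operatorname{rank}(M(\boldsymbol h))$. For a fixed $\boldsymbol h$ of rank $r$, the inner count over $\boldsymbol k$ splits off a factor $\ll(\rho P)^{n-r}$ from the kernel of $M(\boldsymbol h)$, while the remaining $r$ coordinates contribute a lattice-point count in the rank-$r$ image $M(\boldsymbol h)\mathbb{Z}^{n}$ (a sublattice of $\mathbb{Z}^{n}$ whose generators have height $\ll M\rho P$) subject to the Diophantine conditions $\|c\alpha v_{j}\|\le(\rho P)^{-1}$; this last count is controlled by Davenport's lemma on sums $\sum\prod\min(\cdot,\|\cdot\|^{-1})$ together with the approximation $|\alpha-a/q|\le(qQ)^{-1}$, and is small in regime (I), where $q>P_{0}$ forces a saving of at least $P_{0}^{-1}$ per active direction, and in regime (II), where $|\alpha-a/q|>T$ with $q\le P_{0}$ controls the reciprocal-distance terms. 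The number of $\boldsymbol h$ of a given rank that enters this count is bounded using \eqref{good}, invoked at the scale (a suitable power of $\rho P$, reached when necessary by Davenport's shrinking argument) at which the count is organized; it is exactly this step that the inequalities \eqref{minor_requir_vartheta_specific_1}, \eqref{good_demand_2}, \eqref{good_demand_3} and \eqref{the_critical_when_general} are designed to make legitimate and essentially lossless. Summing the resulting (essentially geometric) contributions over $0\le r\le n$, the two ends $r=0$ and $r=n$ would be handled by \eqref{good_demand_2} and \eqref{good_demand_3}, the uniform applicability of \eqref{good} by \eqref{minor_requir_vartheta_specific_1}, and the maximum, occurring at an intermediate ``critical'' rank, by \eqref{the_critical_when_general}. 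Inserting the resulting bound for $N(\alpha)$ into the Weyl inequality and taking fourth roots would then give $|S(\alpha)|\ll P^{n-3-\varepsilon}M^{(-8n^{2}+12n+1)/(n-2)}$ uniformly on $\mathfrak{m}$.

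The main obstacle is the delicate balance in the estimate for $N(\alpha)$: the rank-$r$ terms pull in opposite directions --- few $\boldsymbol h$ of low rank but many free $\boldsymbol k$-directions, against many $\boldsymbol h$ of high rank but strong Diophantine control --- so there is no single dominant regime, and the exponent $e(n)$ is forced by the worst (critical-rank) case in combination with the two Diophantine regimes; a related subtlety is to invoke \eqref{good} at a scale below $M^{\vartheta}$ without wasting the saving. A further, purely computational, difficulty is to check that the six displayed hypotheses of the Lemma are simultaneously consistent and are met by the exponents $e(n)$ of Table \ref{tab1} together with an admissible $\vartheta$ --- the one for which Lemma \ref{lemma_when_not_good} still yields $\Lambda_{n}(C)\ll M^{e_{2}(n)}$ in the non-good case --- which is an elementary but lengthy optimization, with $n=14$ requiring a separate treatment because of the two cases in the definition \eqref{u_normal_definition} of $T$.
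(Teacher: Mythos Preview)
Your plan has a genuine structural gap: you reduce to a \emph{pointwise} bound $|S(\alpha)|\ll P^{n-3-\varepsilon}M^{(-8n^{2}+12n+1)/(n-2)}$ on $\mathfrak{m}$, obtained from two Weyl differencings and a rank stratification of the bilinear counting function $N(\alpha)$. That pointwise route is essentially Davenport's, and it is known to fall short of $n=14$; indeed in the paper the bound coming from two Weyl steps (their equation \eqref{3_1_2}, packaged as Lemma~\ref{lemma3_2}) only controls $\sum(R,\phi,\pm)$ on a restricted strip of $\phi$, not on all of the minor arcs. The remaining range, which for large $R$ is the interval $\phi_1\le\phi\le\phi_2$ and for small $R$ is handled separately, is exactly where a purely pointwise Weyl estimate is too weak.

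What the paper actually does---following Heath-Brown \cite{heath2007cubic}---is to bound $\int_{\mathfrak m}|S|$ via the dyadic averages $\sum(R,\phi,\pm)$ and then, for the ranges not covered by Lemma~\ref{lemma3_2}, to apply a \emph{van der Corput} differencing with a free parameter $H\le\rho P$ (not just a second Weyl step), followed by an average in $\alpha$ through $M(\alpha,H)=\int_{\alpha-\kappa}^{\alpha+\kappa}|S(\beta)|^2\,d\beta$ and an average over $q$ through $A(\theta,R,H,P)$. The extra averaging in $q$ is what makes the term $q_2^{-r/2}$ in \eqref{from_Z_2} exploitable and yields the factor $\min\{1,(\rho P)^2\psi\}$ in \eqref{A(theta,R,H,P)_upper_boung_if_good}; optimising $H$ as in \eqref{H_sacond_time} then produces the error quantity $E$ of \eqref{E_definition}, and the two choices of $H$ are precisely what force the hypotheses \eqref{good_demand_2} and \eqref{good_demand_3} (they guarantee $H\le M^{\vartheta}$ so that \eqref{good} applies), while \eqref{minor_requir_vartheta_specific_1} and \eqref{the_critical_when_general} arise from feeding the endpoints of the two $\phi$-regimes into \eqref{minor_requir_vartheta}. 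So your attribution of the four $\vartheta$-conditions to ``$r=0$, $r=n$, uniform applicability of \eqref{good}, and the critical rank'' does not match how they actually enter: they are constraints on the van der Corput parameter $H$, not on ranks in a fixed Weyl inequality. To repair the argument you would need to replace the pointwise strategy by the $H$-differencing plus double averaging of Section~3, and then your six displayed hypotheses will reappear with the roles the paper assigns to them.
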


The upper bound of the integration in Lemma \ref{contribution_minor_normal} is chosen in this way to ensure that the contribution of the minor arcs will be smaller than the main term in the following proof of the two theorems. We will prove these two Lemmas in section 3.

For the truncated singular series, we will prove the following Lemma in section 4.

\begin{lemma}
    \label{series_lower_bound}
Let $\varepsilon > 0$ and assume that $n \geq 14.$ Suppose that \eqref{when_not_good} does not hold and that $C$ is $\vartheta$-good. If $\vartheta = \infty$ then
\begin{equation*}
    \mathfrak{S}(P_0) \gg M^{-6n-\varepsilon}-M^{\frac{n}{6}}P_0^{2-\frac{n}{6}+\varepsilon}.
\end{equation*}
If $\vartheta < \infty$ and $\delta$ satisfies
\begin{equation}
    \label{circle_1}
    2<\delta<\frac{n}{6},
    \quad
    \frac{2n}{n-6\delta}<1+2\vartheta,
\end{equation}
with $P_{0} \ll M^{1+2\vartheta},$ equivalently, 
\begin{equation}
    \label{circle_2}
    eoP_0 \leq 1+2\vartheta,
\end{equation}
then we have
\begin{equation*}
    \mathfrak{S}(P_0) \gg M^{-6n-\varepsilon}-M^{\frac{n\delta}{n-6\delta}}P_0^{2-\delta+\varepsilon}.
\end{equation*}
\end{lemma}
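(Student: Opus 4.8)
The plan is to write $\mathfrak{S}(P_0)$ as the complete singular series minus a tail, estimate the tail using the $\vartheta$-good hypothesis, and quote an existing lower bound for the complete singular series. Set
\[
A(q) = q^{-n}\sum_{\substack{1\le a\le q\\(a,q)=1}}S(a,q),
\]
so that $A$ is multiplicative and $\mathfrak{S}(R)=\sum_{q\le R}A(q)$. The first task is to show that $\sum_{q\ge 1}A(q)$ converges absolutely, so that $\mathfrak{S}:=\lim_{R\to\infty}\mathfrak{S}(R)=\prod_p\sigma_p$ exists, where $\sigma_p=\lim_k p^{-k(n-1)}\#\{\boldsymbol r\bmod p^k:\ C(\boldsymbol r)\equiv 0\}$; then $\mathfrak{S}(P_0)=\mathfrak{S}-\sum_{q>P_0}A(q)$, and it suffices to bound $\sum_{q>P_0}|A(q)|$ from above and $\mathfrak{S}$ from below.

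For the lower bound on $\mathfrak{S}$, I would invoke the first part of Lemma~16 of \cite{bernert2023} (this is the corrected replacement for the erroneous Lemma~11 of \cite{browning2009least}), which gives $\mathfrak{S}\gg M^{-6n-\varepsilon}$ under exactly the hypotheses here — that \eqref{when_not_good} fails and $C$ is $\vartheta$-good. Heuristically the bound has this shape because the $\sigma_p$ with $p$ not dividing a discriminant-type quantity $\Delta(C)\ll M^{O(n)}$ satisfy $\sigma_p=1+O(p^{-1-\eta})$ for a fixed $\eta>0$ by counting points on the reduction mod $p$, so $\prod_{p\nmid\Delta}\sigma_p\gg 1$; for the $O(\mathcal L)$ exceptional primes $p\mid\Delta$, each at most $M^{O(n)}$, the failure of \eqref{when_not_good} together with $\vartheta$-goodness forces the existence of a $\mathbb Q_p$-zero that is nonsingular after finitely many lifts, and a Hensel-type argument then gives $\sigma_p\gg p^{-\kappa n}$ with $\kappa$ absolute; multiplying over the exceptional primes contributes the factor $M^{-6n-\varepsilon}$.

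The genuinely new part is the tail estimate. Here I would bound $S(a,q)$ by feeding the $\vartheta$-good counting bound \eqref{good} into a van der Corput / Weyl-differencing treatment of the complete cubic exponential sum, controlling the diagonal contribution through the rank function $r(\boldsymbol x)$. As long as $q$ does not exceed the range in which \eqref{good} is available, this yields a dyadic bound of the form $\sum_{q\sim Q}|A(q)|\ll_\varepsilon Q^{2-\delta+\varepsilon}M^{n\delta/(n-6\delta)}$ for any $\delta$ with $2<\delta<n/6$; the $M$-power is the price of extending \eqref{good} past the scale $M^\vartheta$, where only a weaker, $M$-polluted estimate survives. The constraint $\tfrac{2n}{n-6\delta}<1+2\vartheta$ in \eqref{circle_1} — equivalently $P_0\ll M^{1+2\vartheta}$, which is \eqref{circle_2} — is precisely what keeps this dyadic bound valid for all $Q>P_0$; then, since $2-\delta<0$, summing the geometric series over dyadic $Q>P_0$ gives $\sum_{q>P_0}|A(q)|\ll M^{n\delta/(n-6\delta)}P_0^{2-\delta+\varepsilon}$, and subtracting from $\mathfrak S$ yields the claimed bound. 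The case $\vartheta=\infty$ is the limiting case $\delta\to n/6$ of this computation — there \eqref{good} holds for every $H$ — and produces the stated $M^{n/6}P_0^{2-n/6+\varepsilon}$.

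I expect the main obstacle to be the tail estimate rather than the quoted lower bound: obtaining the exponent $n\delta/(n-6\delta)$ with the correct $M$-dependence requires carefully reconciling the two scales at play — the $\vartheta$-good counting estimate \eqref{good} only controls vectors with $|\boldsymbol x|\le M^\vartheta$, while the moduli $q$ entering the tail can be as large as $M^{1+2\vartheta}$ — and making these ranges overlap is exactly the content of the side conditions \eqref{circle_1} and \eqref{circle_2}. A secondary point, and the reason the statement is phrased with a subtracted error term rather than an outright positive lower bound, is that one must finally check that $M^{n\delta/(n-6\delta)}P_0^{2-\delta}\ll M^{-6n-\varepsilon}$ for an admissible $\delta$; with $P_0$ chosen as in \eqref{P_0_vartheta_definition} (whose exponent \eqref{eop_definition}, involving $\sqrt{3n(3n-35)}$, is pinned down by optimizations elsewhere in the argument) this reduces to a short explicit verification.
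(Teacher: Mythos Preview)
Your overall structure is close to the paper's, but there is a genuine gap in the $\vartheta<\infty$ case, and it stems from comparing $\mathfrak S(P_0)$ to the \emph{complete} singular series $\mathfrak S$ rather than to the truncated Euler product
\[
S(P_0)=\prod_{p\le P_0}\sum_{i=0}^{k(p)}A(p^i)
\]
that the paper (and Bernert's Lemma~16) actually use. Two consequences:

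First, Bernert's lower bound $\gg M^{-6n-\varepsilon}$ is stated for $S(P_0)$, not for $\mathfrak S$; you are quoting the right reference for the wrong quantity.

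Second, and more seriously, your tail $\sum_{q>P_0}|A(q)|$ runs over \emph{all} integers $q>P_0$, and for $q$ large enough the $\vartheta$-good hypothesis gives you nothing: the Weyl differencing bound $S(a,q)\ll M^{n/6}q^{5n/6+\varepsilon}$ requires choosing the differencing parameter $H\asymp(q/M)^{1/3}$, and \eqref{good} only controls this when $H\le M^{\vartheta}$, i.e.\ when $q\ll M^{1+3\vartheta}$. Your assertion that the condition $P_0\ll M^{1+2\vartheta}$ ``keeps this dyadic bound valid for all $Q>P_0$'' is backwards: that condition bounds $P_0$ from above, not the dyadic scales $Q$ in the tail, and for $Q\gg M^{1+3\vartheta}$ you have no usable estimate on $A(q)$ at all. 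In particular it is not even clear that $\sum_q|A(q)|$ converges when $\vartheta<\infty$, so $\mathfrak S$ may not be defined in the way you use it.

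The paper avoids this by comparing to $S(P_0)$ instead. The difference $\mathfrak S(P_0)-S(P_0)$ is then a sum over the set $\mathscr L(P_0)$ of $q>P_0$ whose prime-power divisors $p^i$ all satisfy $p\le P_0$ and $i\le k(p)$. Every such $q$ can be factored (following \cite{browning2009least}) as $q=q_1\cdots q_t\,q_{t+1}$ with each $q_j$ (for $j\le t$) lying in the range $M^{n/(n-6\delta)+\varepsilon}\le q_j\le M^{1+2\vartheta}$ where the $\vartheta$-good estimate $A(q_j)=O(q_j^{1-\delta})$ does apply, and with the residual $q_{t+1}<M^{n/(n-6\delta)+\varepsilon}$. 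Multiplicativity of $A$ then gives $R(P_0)\ll M^{n\delta/(n-6\delta)}P_0^{2-\delta+\varepsilon}$. The constraints \eqref{circle_1}, \eqref{circle_2} are what make this factorisation possible (one needs $2A<B$ with $A=n/(n-6\delta)+\varepsilon$, $B=1+2\vartheta$), not what extends the dyadic bound to arbitrarily large $Q$.

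For $\vartheta=\infty$ your sketch is essentially correct and matches Lemma~4.2 in the paper.
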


We now have everything in place to prove Theorem \ref{theorem_normal}, in which the cubic form is general. 
To ensure $\mathfrak{S}(P_0) > 0,$ we need
\begin{equation*}
     M^{-6n-\varepsilon} \gg M^{\frac{n\delta}{n-6\delta}}P_0^{2-\delta+\varepsilon},
\end{equation*}
which holds providing
\begin{equation}
    \label{G_normal}
    P_0 > M^{\frac{1}{\delta-2}\left(\frac{n\delta}{n-6\delta}+6n\right)+\varepsilon}.
\end{equation}
We regard the exponent in the right side of \eqref{G_normal} as a function of $\delta \in \left(2, \frac{n}{6}\right)$ and record it as $f_n(\delta),$ which has a derivative with respect to $\delta,$ 
\begin{equation*}
    \frac{\mathrm{d} f_n(\delta)}{\mathrm{d} \delta}=\frac{2n}{(\delta-2)^2(n-6\delta)^2}\bigg(-105\delta^2+36n\delta-n(3n+1)\bigg).
\end{equation*}
It is optimal to take the minimum value at
\begin{equation*}
    \delta=\delta_0(n) =\frac{6n}{35}-\frac{\sqrt{3n(3n-35)}}{105},
\end{equation*}
where
\begin{equation*}
    \delta_0(n) \in \left(2, \frac{n}{6}\right),
\end{equation*}
and
\begin{equation}  
\label{f_n_extremum}
f_n\big(\delta_0(n)\big)=\frac{3675n\sqrt{3n(3n-35)}}{(\sqrt{3n(3n-35)}-18n+210)(3n-6\sqrt{3n(3n-35)})}. 
\end{equation}
So \eqref{G_normal} holds provided that
\begin{equation}
    \label{P_0_another_requirment}
    P_{0} \geq M^{\frac{3675n\sqrt{3n(3n-35)}}{(\sqrt{3n(3n-35)}-18n+210)(3n-6\sqrt{3n(3n-35)})}+\varepsilon},
\end{equation}
which is satisfied by \eqref{P_0_vartheta_definition}.
Comparing $\mathfrak{S}(P_0)\mathfrak{I}P^{n-3}$ with the error terms from the major arcs, it need to ensure  
\begin{equation*}
    P^{n-3-\varepsilon}M^{\frac{-8n^2+12n+1}{n-2}} \gg P^{n-1}M^{3eoP_0-\frac{(n-1)(2n+1)}{n-2}}T,
\end{equation*}
and
\begin{equation*}
    P^{n-3-\varepsilon}M^{\frac{-8n^2+12n+1}{n-2}} \gg P^{n-\frac{9}{2}}M^{\frac{-2n^2+8n+3}{n-2}+\varepsilon}T^{-\frac{1}{2}},
\end{equation*}
which hold provided that
\begin{equation}
\label{T_upper_bound_from_major_integral}
    P^{-2-\varepsilon}M^{-3eoP_0-\frac{n(6n-11)}{n-2}} \gg T,
\end{equation}
and
\begin{equation}
\label{u_need_from_major_integral_error_term}
     T \gg P^{-3+\varepsilon}M^{\frac{4(3n^2-2n+1)}{n-2}}.
\end{equation}
Our choice \eqref{u_normal_definition} of $T$ ensures \eqref{u_need_from_major_integral_error_term}. Furthermore, \eqref{T_upper_bound_from_major_integral} holds provided that
\begin{equation}
\label{for_the_upper_bound_of_T}
    e(n) \geq
    \begin{cases}
    2154.556, \quad &\text{if } n =14,\\
    3eoP_0+\frac{18n^2-19n+4}{n-2}+\varepsilon, \quad &\text{if }n\geq 15.
    \end{cases}
\end{equation}

And considering the requirements of the minor arcs, we need \eqref{lemma_3.2_right_Q_1} and \eqref{Unexpected pregnancy} hold. In addition, we also need to ensure that the conditions related to $\vartheta$ are valid, including 
\eqref{good_demand_1},
\eqref{minor_requir_vartheta_specific_1}, \eqref{good_demand_2}, 
\eqref{good_demand_3}, \eqref{the_critical_when_general}, \eqref{circle_1}, 
\eqref{circle_2}.

The final result of Theorem \ref{theorem_normal} comes from 
\eqref{major_arcs_disjoint_vartheta},
\eqref{The child I want to abandon},
\eqref{lemma_3.2_right_Q_1},
\eqref{Unexpected pregnancy},
\eqref{for_the_upper_bound_of_T},
and
\eqref{good_demand_1},
\eqref{minor_requir_vartheta_specific_1}, \eqref{good_demand_2}, 
\eqref{good_demand_3}, \eqref{the_critical_when_general}, \eqref{circle_1}, 
\eqref{circle_2}.

First, regardless of the restriction of $\vartheta$-good, we temporarily write
\begin{equation*}
    P=M^{E(n)}.
\end{equation*}
$E(n)$ is determined by \eqref{major_arcs_disjoint_vartheta},
\eqref{The child I want to abandon},
\eqref{lemma_3.2_right_Q_1}, \eqref{Unexpected pregnancy},
\eqref{for_the_upper_bound_of_T}, where \eqref{for_the_upper_bound_of_T} is the strongest among them.

For the remaining restrictive conditions that $\vartheta$ should satisfy, we substitute $e(n)=E(n), \delta=\delta_0(n)$ into \eqref{good_demand_1},
\eqref{minor_requir_vartheta_specific_1}, \eqref{good_demand_2}, 
\eqref{good_demand_3}, \eqref{the_critical_when_general}, \eqref{circle_1}, 
\eqref{circle_2}. 
After some calculations, we can know that \eqref{the_critical_when_general} is the strongest among them. Moreover, 
the results of $14 \leq n \leq 18$  can be given in Table \ref{tab2}.

\begin{table}[h!]\footnotesize\tabcolsep 16pt
    \begin{center}
    \caption{The case of $14 \leq n \leq 18$}\label{tab2}\vspace{-2mm}
    \end{center}
    \begin{center}
    \resizebox{\linewidth}{!}{
        \begin{tabular}{l|c|c|c|c|c|c|c|}
    		\hline\hline\noalign{\smallskip}	
    		$n$ & $14$ & $15$ & $16$ & $17$ & $18$  \\
    		\noalign{\smallskip}\hline\noalign{\smallskip}
    		$E(n)$ & $1389.187$ & $1123.474$ & $984.990$ & $904.562$ & $855.213$ \\
      \noalign{\smallskip}\hline\noalign{\smallskip}
    		$\delta=\delta_0(n)$ & $2.237$ & $2.369$ & $2.505$ & $2.642$ & $2.781$  \\

      \noalign{\smallskip}\hline\noalign{\smallskip}
    		$\vartheta$ & $1520.904$ & $835.543$ & $622.688$ & $523.934$ & $462.548$\\

      \noalign{\smallskip}\hline\noalign{\smallskip}
    		$\left\lceil\frac{n^2-1}{2}+\frac{\vartheta n(n-1)}{2}\right\rceil$ & $138500$ & $87844$ & $74851$ & $71400$ & $70932$ \\
    		\noalign{\smallskip}\hline
    	\end{tabular}
     }
    \end{center}
\end{table}

Due to method limitations, the results may get worse when $n>18$. To avoid this, we just need to note that if the cubic form has $n > 18$ variables, we can always set $n-18$ of the variables equal to zero in order to obtain a cubic form in exactly $18$ variables. 

Hence the Theorem \ref{theorem_normal} follows.

\section{The Minor Arcs}

In this section, we consider the contribution from the minor arcs. For the integral over minor arcs, we follow the treatment of Heath-Brown \cite{heath2007cubic}, so sometimes we only give the differences. Throughout this section, we assume that $n \geq 14$.

Let 
\begin{equation}%done 3
    \label{Q}
    Q=\min\left\{P^{\frac{2(n-3)}{n+4}-\varepsilon}M^{\frac{-2(14n^2-20n+3)}{(n+4)(n-2)}}, P^{\frac{3}{2}}M^{\frac{-5(n+1)}{2(n-2)}}\right\}.
\end{equation}
The first restriction in the curly brackets of \eqref{Q} comes from the estimate of $E$, which will be presented in \eqref{E_definition}.
For any $\alpha \in \mathfrak{m},$ by Dirichlet's approximation theorem, there exist coprime integers $0 \leq a \leq q \leq Q$ such that 
\begin{equation}
    \label{Dirichlet_bijin}
    \left| \alpha-\frac{a}{q} \right| \leq \frac{1}{qQ}.
\end{equation}
Since $\alpha \notin \mathfrak{R},$ so either
\begin{equation}
    \label{fenmuda}
    q > P_{0}
\end{equation}
or
\begin{equation}
    \label{banjingda}
    \left| \alpha-\frac{a}{q} \right| > T.
\end{equation}

Suppose that $H=H(R,\phi,\pm)$ is a positive integer with $H \leq P,$ and consider 
\begin{equation*}
    \sum(R,\phi,\pm) := \sum_{R < q \leq 2R}\sum_{\substack{a \leq q\\[3pt](a,q)=1}}\int_{\phi}^{2\phi} \left| S(\frac{a}{q} \pm \nu) \right| d\nu
\end{equation*}
with
\begin{equation}
    \label{Dirichlet_tiaojian}
    \phi \leq (RQ)^{-1}\text{ and }R \leq Q.
\end{equation}

\begin{lemma}
Let $\varepsilon > 0$ and assume that $\rho P \geq 1.$ Assume that $\alpha = \frac{a}{q}+z$ for coprime  integers $0 \leq a 
\leq q$ and that $C$ is $\vartheta$-good. Then we have
\begin{equation*}
    S(\alpha) \ll (\rho P)^{n+\varepsilon}\left(\frac{1}{\rho^2 P^2}+Mq|z|+\frac{q}{\rho^3 P^3}+\frac{1}{q}\min\left\{M,\frac{1}{|z|\rho^3 P^3}\right\}+\frac{1}{M^{2\vartheta}}\right)^{\frac{n}{8}}.
\end{equation*}
\end{lemma}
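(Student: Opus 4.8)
The plan is to bound $S(\alpha)$ by a Weyl-type differencing argument following Heath-Brown's treatment of cubic forms, exploiting the hypothesis that $C$ is $\vartheta$-good to control the count of integer points where the bilinear form $M(\boldsymbol{x})$ has small rank. First I would apply Weyl differencing twice (or use the standard van der Corput device for cubic exponential sums): squaring $|S(\alpha)|^2$ produces a sum over a difference variable $\boldsymbol{h}$ of a quadratic exponential sum in $\boldsymbol{x}$, and squaring once more — or directly bounding $|S(\alpha)|^4$ — yields, after Cauchy--Schwarz, an expression of the shape
\begin{equation*}
    |S(\alpha)|^{2^k} \ll (\rho P)^{(2^k-1)n}\sum_{\boldsymbol{h}}\ \Big|\sum_{\boldsymbol{x}} e\big(\alpha\, \boldsymbol{x}^{T} M(\boldsymbol{h})\boldsymbol{x} + \text{lower order}\big)\Big|,
\end{equation*}
where $\boldsymbol{h}$ ranges over a box of side $\asymp \rho P$. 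The linear exponential sum in $\boldsymbol{x}$ over the box $P\mathscr{B}$, once the quadratic form $\boldsymbol{x}^T M(\boldsymbol{h}) \boldsymbol{x}$ is diagonalized, factors as a product of one-dimensional sums each of which is $\ll \min\{\rho P,\ \|\cdot\|^{-1}\}$, so the whole thing reduces to counting $\boldsymbol{h}$ for which $\alpha$ times the eigenvalues of $M(\boldsymbol{h})$ are simultaneously close to integers.

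Second, I would split the sum over $\boldsymbol{h}$ according to $r = r(\boldsymbol{h}) = \operatorname{rank} M(\boldsymbol{h})$. For each fixed $r$, the $\vartheta$-good hypothesis \eqref{good} gives $\ll H^{r+\varepsilon}$ choices of $\boldsymbol{h}$ with $|\boldsymbol{h}| \leq H \asymp \rho P$, valid in the range $1 \leq \rho P \leq M^{\vartheta}$ — this is precisely where the term $M^{-2\vartheta}$ enters, since for $\rho P$ beyond that range we cannot control the rank count and must absorb the loss. Writing $\alpha = a/q + z$, a classical lemma on counting solutions of $\|\alpha \lambda\| \leq \delta$ for integer $\lambda$ in an interval (Heath-Brown's Lemma on rational approximations) converts the rank-$r$ contribution into a quantity governed by $q$, $|z|$, $M$, and $\rho P$, producing the four competing terms $\rho^{-2}P^{-2}$, $Mq|z|$, $q\rho^{-3}P^{-3}$, and $q^{-1}\min\{M, |z|^{-1}\rho^{-3}P^{-3}\}$ inside the bracket. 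Taking the worst $r$ across $0 \leq r \leq n$ and unwinding the $2^k$-th root (with $2^k$ chosen so that the final exponent on the bracket is $n/8$, i.e. $k=3$) gives the stated bound.

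The main obstacle will be the bookkeeping in the second step: carefully tracking how the rank stratification interacts with the Gauss-sum / geometry-of-numbers estimates so that each of the four terms in the bracket emerges with the correct exponent, and verifying that the $M^{-2\vartheta}$ term genuinely dominates whenever $\rho P > M^{\vartheta}$ so that the $\vartheta$-good counting bound can be legitimately invoked in the complementary range. A secondary technical point is handling the diagonalization of $M(\boldsymbol{h})$ uniformly — the eigenvalues need not be rational, so one works instead with the Smith normal form or with a direct estimate for $\#\{\boldsymbol{x} : \|M(\boldsymbol{h})\boldsymbol{x} \cdot \alpha + \cdots\| \text{ small}\}$ — but since the statement explicitly says we may follow Heath-Brown and "only give the differences," I would cite \cite{heath2007cubic} for the structural parts of the differencing and concentrate the written proof on the points where the box $P\mathscr{B}$ (rather than a cube centered at the origin), the parameter $\rho$, and the $\vartheta$-good hypothesis change the constants.
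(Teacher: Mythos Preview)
The paper does not prove this lemma at all: immediately after stating it, the authors write ``This is the Lemma 2 of \cite{browning2009least}'' and move on. So there is no proof in the paper to compare against --- you are proposing to supply one where the authors chose to cite.

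Your overall strategy (Weyl differencing, rank stratification of $M(\boldsymbol{h})$, the $\vartheta$-good bound \eqref{good}, and a geometry-of-numbers shrinking step to produce the $q,|z|,M,\rho P$ terms) is indeed the skeleton of the argument in Browning--Dietmann--Elliott, which in turn follows Davenport. But two concrete points in your write-up are off and would not lead to the stated bound.

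First, the exponent $n/8$ does \emph{not} come from three Weyl squarings. For a cubic form, two applications of Cauchy--Schwarz already linearise the phase: one obtains
\[
|S(\alpha)|^4 \ll (\rho P)^{2n+\varepsilon}\,\#\bigl\{(\boldsymbol{x},\boldsymbol{y}):|\boldsymbol{x}|,|\boldsymbol{y}|<\rho P,\ \|6\alpha B_i(\boldsymbol{x},\boldsymbol{y})\|<(\rho P)^{-1}\ \forall i\bigr\},
\]
with the \emph{bilinear} forms $B_i$ of \eqref{Bi(h,w)_definition}; a third squaring of a linear sum gains nothing. The extra factor of $2$ in the denominator (turning $n/4$ into $n/8$) comes instead from bounding this $2n$-dimensional counting function via Davenport's shrinking lemma, applied so that the resulting count of pairs with $M(\boldsymbol{x})\boldsymbol{y}=\boldsymbol{0}$ is controlled by the $\vartheta$-good hypothesis on $r(\boldsymbol{x})$. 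It is the interplay of the fourth power on $|S|$ and the shrinking in the pair $(\boldsymbol{x},\boldsymbol{y})$ that produces $n/8$.

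Second, your displayed inequality is internally inconsistent: you write ``squaring once more'' but the inner sum still carries the quadratic phase $\alpha\,\boldsymbol{x}^T M(\boldsymbol{h})\boldsymbol{x}$, and you then speak of diagonalising this quadratic form to obtain a product of linear sums. That is not how the argument runs --- no diagonalisation is used, and after two differencings the phase is genuinely linear in the remaining variable, so the $\min\{\rho P,\|\cdot\|^{-1}\}$ bound applies directly. The $M^{-2\vartheta}$ term arises because the $\vartheta$-good count \eqref{good} is only valid for $H\le M^{\vartheta}$; outside that range one records the trivial loss, and the factor $2$ in $2\vartheta$ matches the appearance of $H$ in both coordinates of the pair.
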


This is the Lemma 2 of \cite{browning2009least}.

Since $A^{\frac{1}{2}} \leq B +AB^{-1}$ for any $A, B >0$, by the above Lemma we can deduce that
\begin{equation}
    \label{3_1_1}
    S(\alpha) \ll (\rho P)^{n+\varepsilon}\left(Mq|z|+\frac{1}{q|z|\rho^3 P^3}+\frac{1}{M^{2\vartheta}}\right)^{\frac{n}{8}}\text{ for }q \leq M^{\frac{1}{2}}\rho^{\frac{3}{2}}P^{\frac{3}{2}}.
\end{equation}
The second item in the curly brackets of \eqref{Q}  guarantees that \eqref{3_1_1} is valid in the estimation of $\sum(R,\phi,\pm)$. So we have
\begin{equation}
    \label{3_1_2}
    \sum(R,\phi,\pm) \ll R^2 \phi (\rho P)^{n+\varepsilon}\left\{ (MR\phi)^{\frac{n}{8}}+(R\phi\rho^3 P^3)^{-\frac{n}{8}}+M^{-\frac{\vartheta n}{4}}\right\}.
\end{equation}

By \eqref{3_1_2}, we have the following result.

\begin{lemma}
    \label{lemma3_2}
We have
\begin{equation*}
    \sum(R,\phi,\pm) \ll P^{n-3-\varepsilon}M^{\frac{-8n^2+12n+1}{n-2}}
\end{equation*}
providing that $\phi \leq (RQ)^{-1},$ 
\begin{equation*}
    P^{-3+\varepsilon}M^{\frac{54n^2-101n-8}{(n-2)(n-8)}}R^{-\frac{n-16}{n-8}} \ll \phi \ll P^{-\frac{24}{n+8}-\varepsilon}M^{\frac{-49n^2+106n+8}{(n+8)(n-2)}}R^{-\frac{n+16}{n+8}},
\end{equation*}
and
\begin{equation}
 \label{minor_requir_vartheta}
    P^{3+\varepsilon}R^2\phi \ll M^{\frac{\vartheta n}{4}-\frac{6n^2-13n-1}{n-2}}.
\end{equation}
\end{lemma}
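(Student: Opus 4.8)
The plan is to prove Lemma~\ref{lemma3_2} by feeding the choice of parameters into the bound \eqref{3_1_2} and checking that each of the three terms in the braces is dominated by the target $P^{n-3-\varepsilon}M^{\frac{-8n^2+12n+1}{n-2}}$. Write $G = (\rho P)^{n+\varepsilon}$ for brevity; by \eqref{rho,z} we have $\rho \asymp M^{-2-\frac{5}{n-2}}$, so $G \asymp P^{n+\varepsilon}M^{-(2n+\frac{5n}{n-2})-\varepsilon'}$ up to a harmless adjustment of $\varepsilon$. The prefactor in \eqref{3_1_2} is $R^2\phi\, G$, and so the three contributions to $\sum(R,\phi,\pm)$ are, schematically,
\begin{equation*}
    R^2\phi\, G\,(MR\phi)^{n/8},\qquad R^2\phi\, G\,(R\phi\rho^3P^3)^{-n/8},\qquad R^2\phi\, G\, M^{-\vartheta n/4}.
\end{equation*}
Each of these must be $\ll P^{n-3-\varepsilon}M^{\frac{-8n^2+12n+1}{n-2}}$.

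First I would handle the third term, which is the simplest: using $R \leq Q \leq P$, $\phi \leq (RQ)^{-1}$ so $R^2\phi \leq R \leq P$, hence $R^2\phi\, G\, M^{-\vartheta n/4} \ll P^{n+1+\varepsilon}M^{-(2n+\frac{5n}{n-2})-\varepsilon'}M^{-\vartheta n/4}$; comparing exponents against the target gives exactly a condition of the shape $P^{3+\varepsilon}R^2\phi \ll M^{\vartheta n/4 - (6n^2-13n-1)/(n-2)}$ after collecting the $M$-powers (one checks $2n + \frac{5n}{n-2} + \frac{8n^2-12n-1}{n-2} = \frac{6n^2-13n-1}{n-2}$), which is precisely the hypothesis \eqref{minor_requir_vartheta}. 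Next, for the first term, substituting $R^2\phi(MR\phi)^{n/8} = R^{2+n/8}\phi^{1+n/8}M^{n/8}$ and demanding domination by the target yields, after isolating $\phi$, an \emph{upper} bound $\phi \ll P^{-24/(n+8)-\varepsilon}M^{(\cdots)}R^{-(n+16)/(n+8)}$; a direct exponent computation should reproduce the $M$-exponent $\frac{-49n^2+106n+8}{(n+8)(n-2)}$ stated in the lemma (here the $-3$ in $P^{n-3}$ contributes after dividing through by $G$ and taking the $8/(n+8)$-th root). Symmetrically, the second term $R^2\phi(R\phi\rho^3P^3)^{-n/8} = R^{2-n/8}\phi^{1-n/8}\rho^{-3n/8}P^{-3n/8}$ requires a \emph{lower} bound on $\phi$ (since its $\phi$-exponent $1-n/8$ is negative for $n>8$), giving $\phi \gg P^{-3+\varepsilon}M^{(\cdots)}R^{-(n-16)/(n-8)}$, and again the $M$-exponent should come out to $\frac{54n^2-101n-8}{(n-2)(n-8)}$; here one uses $\rho^{-3n/8} \asymp M^{\frac{3n}{8}(2+\frac{5}{n-2})}$ and combines with the $-3n/8$ power of $P$ and the target.

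The main obstacle I anticipate is purely bookkeeping: making sure the three constraints on $\phi$ are mutually consistent on the stated ranges of $R$ (i.e.\ that the lower bound on $\phi$ from the second term does not exceed the upper bound from the first term, nor the cap $(RQ)^{-1}$), since otherwise the lemma would be vacuous. This is not a real difficulty for the proof of the lemma itself — the lemma only asserts the conclusion \emph{under} those hypotheses — but one should record that the intervals are nonempty for $R$ in the relevant dyadic range, which is where the definition \eqref{Q} of $Q$ and the eventual constraints \eqref{good_demand_3}, \eqref{the_critical_when_general} on $e(n)$ and $\vartheta$ enter. A second, minor point of care: the condition $q \leq M^{1/2}\rho^{3/2}P^{3/2}$ needed for \eqref{3_1_1} must hold throughout the summation range $R < q \leq 2R$, which is guaranteed by $R \leq Q$ together with the second entry of the minimum in \eqref{Q} (since $2Q \leq M^{1/2}\rho^{3/2}P^{3/2}$ follows from $Q \leq P^{3/2}M^{-5(n+1)/(2(n-2))}$ and $\rho^{3/2} \asymp M^{-3-\frac{15}{2(n-2)}}$); I would state this at the outset. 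Once these consistency checks are in place, the three exponent comparisons are routine linear algebra in the exponents of $P$ and $M$, and collecting them gives exactly the two displayed bounds on $\phi$ and the bound \eqref{minor_requir_vartheta}, completing the proof.
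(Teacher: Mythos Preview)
Your approach is correct and is exactly what the paper does: the lemma follows immediately from the bound \eqref{3_1_2} by requiring each of the three bracketed terms, multiplied by the prefactor $R^2\phi(\rho P)^{n+\varepsilon}$, to be $\ll P^{n-3-\varepsilon}M^{(-8n^2+12n+1)/(n-2)}$; the second term gives the lower bound on $\phi$, the first the upper bound, and the third gives \eqref{minor_requir_vartheta}. Two small cosmetic slips: your parenthetical identity should read $\frac{8n^2-12n-1}{n-2}-\bigl(2n+\frac{5n}{n-2}\bigr)=\frac{6n^2-13n-1}{n-2}$ (a sign), and the detour through ``$R\le Q\le P$'' for the third term is unnecessary (and $Q\le P$ need not hold) --- you only need to invoke the hypothesis \eqref{minor_requir_vartheta} directly.
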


Now we consider the case of $R \leq P_0$ and $T \leq \phi \leq (RQ)^{-1}.$ 
After some calculation, we deduce that
\begin{equation*}
    \phi \geq T \geq P^{-3+\varepsilon}M^{\frac{54n^2-101n-8}{(n-2)(n-8)}}R^{-\frac{n-16}{n-8}}
\end{equation*}
holds providing
\begin{equation*}
    T \geq
    \begin{cases}
    P^{-3}M^{\frac{54n^2-101n-8}{(n-2)(n-8)}}P_0^{\frac{16-n}{n-8}+\varepsilon}, \quad &\text{if }14 \leq n \leq 15,\\
        \quad\\
    P^{-3}M^{\frac{54n^2-101n-8}{(n-2)(n-8)}+\varepsilon}, \quad &\text{if }n\geq 16.
    \end{cases}
\end{equation*}
Substitute the definition \eqref{P_0_vartheta_definition} of $P_0,$ specifically, 
\begin{equation}   %done3
\label{u_from_small_denominator_large_distance}
    T \geq
    \begin{cases}
        P^{-3+\varepsilon}M^{320.239}, \quad &\text{if } n =14,\\
        P^{-3+\varepsilon}M^{167.972}, \quad &\text{if } n =15,\\
        P^{-3+\varepsilon}M^{\frac{54n^2-101n-8}{(n-2)(n-8)}+\varepsilon}, \quad &\text{if }n\geq 16.
    \end{cases}
\end{equation}
Our choice  \eqref{u_normal_definition} of $T$ satisfies the above.

And
\begin{equation*}
    \phi \leq (RQ)^{-1} \leq P^{-\frac{24}{n+8}-\varepsilon}M^{\frac{-49n^2+106n+8}{(n+8)(n-2)}}R^{-\frac{n+16}{n+8}}
\end{equation*}
holds providing
$$
P_0^{\frac{8}{n+8}} \leq P^{\frac{-24}{n+8}-\varepsilon}M^{\frac{-49n^2+106n+8}{(n+8)(n-2)}} Q.
$$
Recalling the definition \eqref{P_0_vartheta_definition} of $P_0,$ \eqref{Q} of $Q,$ the above inequality holds provided that 
\begin{equation*}  %done3
    %\label{lemma_3.2_right_Q_1}
    e(n) \geq \frac{4(n+4)}{n^2-7n-72}eoP_0+\frac{77n^3+274n^2-746n+16}{2(n-2)(n^2-7n-72)}+\varepsilon,
\end{equation*}
and 
\begin{equation*}  %done3
    %\label{lemma_3.2_right_Q_2}
    e(n) \geq \frac{16}{3(n-8)}eoP_0+\frac{103n^2-167n+24}{3(n-8)(n-2)}+\varepsilon,
\end{equation*}
the first of which two inequalities is stronger and has been recorded as \eqref{lemma_3.2_right_Q_1}.
Noting that
$$
\phi \ll P^{-\frac{24}{n+8}-\varepsilon}M^{\frac{-49n^2+106n+8}{(n+8)(n-2)}}R^{-\frac{n+16}{n+8}}
$$
in this case,
\eqref{minor_requir_vartheta} turns to be 
\begin{equation*}  %done3
\begin{split}
    3e(n)+eoP_0+\varepsilon \leq \frac{\vartheta(n+8)}{4}-\frac{6n^2-14n+1}{n-2} ,
\end{split}
\end{equation*}
which has been recorded as \eqref{minor_requir_vartheta_specific_1}.
Hence, by Lemma \ref{lemma3_2} we get the following result.

\begin{lemma} %done3
    \label{lemma_3_3}
    Assuming that 
    \eqref{lemma_3.2_right_Q_1}
    and \eqref{minor_requir_vartheta_specific_1}
       hold, then we have
\begin{equation*}
    \sum(R,\phi,\pm) \ll P^{n-3-\varepsilon}M^{\frac{-8n^2+12n+1}{n-2}}
\end{equation*}
providing that $R \leq P_0$ and 
\begin{equation*}
    T \leq \phi \leq (RQ)^{-1}.
\end{equation*}
\end{lemma}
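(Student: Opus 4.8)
The plan is to combine Lemma \ref{lemma3_2} with the two preparatory inequalities \eqref{lemma_3.2_right_Q_1} and \eqref{minor_requir_vartheta_specific_1} that have just been verified. Recall that Lemma \ref{lemma3_2} gives the desired bound $\sum(R,\phi,\pm) \ll P^{n-3-\varepsilon}M^{(-8n^2+12n+1)/(n-2)}$ under three hypotheses: the upper constraint $\phi \leq (RQ)^{-1}$, a two-sided sandwich constraint on $\phi$ in terms of $R$, and the $\vartheta$-constraint \eqref{minor_requir_vartheta}. So it suffices to show that, whenever $R \leq P_0$ and $T \leq \phi \leq (RQ)^{-1}$, all three hypotheses of Lemma \ref{lemma3_2} are met.

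First I would dispatch the lower half of the sandwich: we need $\phi \geq P^{-3+\varepsilon}M^{(54n^2-101n-8)/((n-2)(n-8))}R^{-(n-16)/(n-8)}$. Since $\phi \geq T$ by hypothesis, it is enough that $T$ dominate the right-hand side; and since $R \leq P_0$, the worst case of the $R$-dependence $R^{-(n-16)/(n-8)}$ occurs at $R = P_0$ when $n \leq 15$ (the exponent being positive there) and at $R = 1$ when $n \geq 16$. Plugging in the explicit value \eqref{P_0_vartheta_definition} of $P_0$ yields exactly the numerical thresholds displayed in \eqref{u_from_small_denominator_large_distance}, which are satisfied by the choice \eqref{u_normal_definition} of $T$. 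Next I would handle the upper half of the sandwich together with the constraint $\phi \leq (RQ)^{-1}$: since $\phi \leq (RQ)^{-1}$ is assumed, it suffices that $(RQ)^{-1} \leq P^{-24/(n+8)-\varepsilon}M^{(-49n^2+106n+8)/((n+8)(n-2))}R^{-(n+16)/(n+8)}$, which after rearranging (using $R \leq P_0$ to bound $R^{8/(n+8)} \leq P_0^{8/(n+8)}$) reduces to the pair of inequalities on $e(n)$ recorded just above the lemma; the stronger of them is precisely \eqref{lemma_3.2_right_Q_1}. Finally, for \eqref{minor_requir_vartheta}, I would use $\phi \ll P^{-24/(n+8)-\varepsilon}M^{(-49n^2+106n+8)/((n+8)(n-2))}R^{-(n+16)/(n+8)}$ (valid in this regime from the upper sandwich bound just established) together with $R \leq P_0$ to bound $P^{3+\varepsilon}R^2\phi$, and the resulting requirement on $\vartheta$ simplifies — after substituting $eoP_0$ for the exponent of $P_0$ — to \eqref{minor_requir_vartheta_specific_1}.

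With all three hypotheses of Lemma \ref{lemma3_2} verified in the range $R \leq P_0$, $T \leq \phi \leq (RQ)^{-1}$, the conclusion of Lemma \ref{lemma3_2} applies verbatim and gives the stated bound. The main obstacle is bookkeeping rather than conceptual: one must track several competing powers of $M$ and $P$ through the substitutions of \eqref{P_0_vartheta_definition}, \eqref{Q} and \eqref{u_normal_definition}, being careful about the sign of each $R$-exponent so as to correctly identify whether $R=1$ or $R=P_0$ is extremal, and about which of the two $e(n)$-conditions is the binding one. The $n=14$ and $n=15$ cases require the explicit numerical evaluations in \eqref{u_from_small_denominator_large_distance} because there the exponent $(16-n)/(n-8)$ is positive, forcing the $P_0$-dependence into $T$; for $n \geq 16$ the general closed form in \eqref{u_from_small_denominator_large_distance} suffices. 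No new estimates beyond Lemma \ref{lemma3_2} are needed — the lemma is essentially a packaging of the preceding computation into a clean statement about the range of $\phi$ relevant to small denominators and large distances.
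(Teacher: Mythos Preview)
Your proposal is correct and follows essentially the same approach as the paper: you verify each of the three hypotheses of Lemma~\ref{lemma3_2} in the range $R\le P_0$, $T\le\phi\le(RQ)^{-1}$ by the same case analysis on the sign of the $R$-exponent, the same reduction via $R\le P_0$ to the condition $P_0^{8/(n+8)}\le P^{-24/(n+8)-\varepsilon}M^{(-49n^2+106n+8)/((n+8)(n-2))}Q$, and the same substitution of the upper sandwich bound into \eqref{minor_requir_vartheta} to obtain \eqref{minor_requir_vartheta_specific_1}. The paper's proof is precisely the computation displayed in the paragraphs immediately preceding the lemma statement, and your outline reproduces it faithfully.
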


From now on, we assume that $R \leq P_{0}.$ Let $H \leq \rho P$ be a positive integer to be choose later, and $f(\alpha)=e(\alpha C(\boldsymbol{x})).$ Then
\begin{equation*}
    H^nS(\alpha)=\sum_{\boldsymbol{h}}\sum_{\boldsymbol{x}+\boldsymbol{h} \in \mathbb{Z}^{n}\cap P\mathscr{B}} f(\boldsymbol{x}+\boldsymbol{h}),
\end{equation*}
where the sum is for vectors with $1 \leq h_i \leq H$ for each $i$. We rewrite this as
\begin{equation*}
    H^nS(\alpha)=\sum_{\boldsymbol{x}}\sum_{\boldsymbol{x}+\boldsymbol{h} \in \mathbb{Z}^{n}\cap P\mathscr{B}} f(\boldsymbol{x}+\boldsymbol{h}).
\end{equation*}
Since $\mathscr{B}=\mathscr{B}(\boldsymbol{z},\rho) =\prod_{i=1}^{n}[z_i-\rho,z_i+\rho]$ and $H \leq \rho P,$ it follows that $\boldsymbol{x}+\boldsymbol{h} \notin \mathbb{Z}^{n}\cap P\mathscr{B}$ unless $x_i \in [Pz_i-P\rho-H,Pz_i+P\rho-1].$ Hence by Cauchy's inequality we have
\begin{equation}
    \label{from_Cauchy}
    H^{2n}\left|S(\alpha)\right|^2 \leq (2P\rho+H)^n\sum_{\boldsymbol{x}}\left| \sum_{\boldsymbol{x}+\boldsymbol{h} \in \mathbb{Z}^{n}\cap P\mathscr{B}} f(\boldsymbol{x}+\boldsymbol{h}) \right|^2.
\end{equation}
Expanding the square we have
\begin{equation}
    \label{3_3_2}
    H^{2n}\left|S(\alpha)\right|^2 \leq (3P\rho)^n\sum_{\boldsymbol{h}} \omega(\boldsymbol{h})\sum_{\boldsymbol{y},\boldsymbol{y}+\boldsymbol{h} \in \mathbb{Z}^{n}\cap P\mathscr{B}} f(\boldsymbol{y}+\boldsymbol{h})\overline{f(\boldsymbol{y})},
\end{equation}
where the sum over $\boldsymbol{h}$ is for $\left\|\boldsymbol{h}\right\| \leq H,$ and
\begin{equation}
    \label{omega}
    \omega(\boldsymbol{h}):=\#\{\boldsymbol{h}_1, \boldsymbol{h}_2 : \boldsymbol{h}=\boldsymbol{h}_1-\boldsymbol{h}_2\} \leq H^n.
\end{equation}
Hence
\begin{equation}
    \label{3_3_4}
    \left|S(\alpha)\right|^2 \ll H^{-n}(\rho P)^n\sum_{\boldsymbol{h}}|T(\boldsymbol{h},\alpha)|,
\end{equation}
where
\begin{equation}
    \label{T(h,alpha)_definition}
    T(\boldsymbol{h},\alpha):=\sum_{\boldsymbol{y},\boldsymbol{y}+\boldsymbol{h} \in \mathbb{Z}^{n}\cap P\mathscr{B}}f(\boldsymbol{y}+\boldsymbol{h})\overline{f(\boldsymbol{y})}.
\end{equation}

Similar to Lemma 2 of \cite{browning2009least} and Section 3 of \cite{heath2007cubic}, we have 
\begin{equation}
    \label{T(h,alpha)_upper_boubd}
    |T(\boldsymbol{h},\alpha)|^2 \ll (\rho P)^{n+\varepsilon}N(\alpha,P,\boldsymbol{h}),
\end{equation}
where
\begin{equation}
    \label{N(alpha,P,h)_definition}
    N(\alpha,P,\boldsymbol{h}):=\#\{\boldsymbol{w}\in \mathbb{Z}^{n}:|\boldsymbol{w}|<\rho P,\left\|6\alpha\boldsymbol{B}_i(\boldsymbol{h};\boldsymbol{w})\right\| < (\rho P)^{-1} , \forall i \leq n\},
\end{equation}
and
\begin{equation}
    \label{Bi(h,w)_definition}
    \boldsymbol{B}_i(\boldsymbol{h},\boldsymbol{w}):=\sum_{j,k}c_{ijk}x_jx_k.
\end{equation}
By Lemma 2.2 of \cite{heath2007cubic}, we have
\begin{equation}
    \label{N(alpha,P,h)_upper_bound}
    N(\alpha,P,\boldsymbol{h})\ll Z^{-n}\#\{\boldsymbol{w}\in \mathbb{Z}^{n}:|\boldsymbol{w}|<Z\rho P,\left\|6\alpha\boldsymbol{B}_i(\boldsymbol{h};\boldsymbol{w})\right\| < Z(\rho P)^{-1}, \forall i \leq n\}
\end{equation}
for any $0<Z \leq 1.$

We define
\begin{equation}
    \label{psi_definition}
    \psi := |\theta|+\frac{1}{MH\rho^2P^2}.
\end{equation}

If $Z$ satisfies the following
\begin{equation}
    \label{Z_restriction}
    Z \leq 1,\quad Z \ll (MHq\rho P \psi)^{-1}, \quad\text{and}\quad Z \ll q\rho P \psi,
\end{equation}
where the implicit constants are chosen suitable. Then the conditions of Lemma 2.3 of \cite{heath2007cubic} are satisfied. So we have
\begin{equation*}
    \begin{split}
        N(\alpha,P,\boldsymbol{h})
        &\ll Z^{-n}\#\{\boldsymbol{w}\in \mathbb{Z}^{n}:|\boldsymbol{w}|<Z\rho P, \boldsymbol{B}_i(\boldsymbol{h};\boldsymbol{w}) =0,  \forall i \leq n\}\\
        &\ll Z^{-n}(Z\rho P)^{n-r(\boldsymbol{h})},
    \end{split}
\end{equation*}
where $r(\boldsymbol{h})$ is given in section 2. By \eqref{3_3_4} and \eqref{T(h,alpha)_upper_boubd} we have
\begin{equation*}
    \left|S(\alpha)\right|^2 \ll H^{-n}(\rho P)^n\sum_{\boldsymbol{h}}(\rho P)^{\frac{n}{2}+\varepsilon}\{Z^{-n}(Z\rho P)^{n-r(\boldsymbol{h})}\}^{\frac{1}{2}}.
\end{equation*}
Assuming the cubic form $C$ is $\vartheta$-good, by \eqref{good}, we have

%这里第一次涉及 good
%\boldsymbol{\textcolor{green}{This is the first place which involves good-%condition}}

\begin{equation*}
    \left|S(\alpha)\right|^2 \ll (\rho P)^{2n+\varepsilon}\{(\rho P)^{-\frac{n}{2}}Z^{-\frac{n}{2}}+H^{-n}\}.
\end{equation*}

We choose
\begin{equation*}
    Z\asymp \min \{(MHq\rho P \psi)^{-1}, q\rho P \psi\},
\end{equation*}
which automatically yields $Z \leq 1,$
then we have
\begin{equation}
    \label{S(alpha)_van_der_Corput}
    \left|S(\alpha)\right|^2 \ll (\rho P)^{2n+\varepsilon}\{(MHq\psi)^{\frac{n}{2}}+(\rho P)^{-n}(q\psi)^{-\frac{n}{2}}+H^{-n}\}.
\end{equation}
%这里省略了Li的q \geq M的限制
Noting the trivial upper bound of $S(\alpha),$ we can omit the restriction of $H \geq 1$ here.

It will be optimal to take
\begin{equation}
    \label{H_first_time}
    H= 
    \begin{cases}
        \left\lfloor \left(\frac{q}{M}\right)^{\frac{1}{3}} \right\rfloor,\quad &\text{if }|\theta| \leq q^{-\frac{1}{3}}M^{-\frac{2}{3}}\rho^{-2}P^{-2},\\
                \\
        %\max\left\{\left[\frac{1}{M\rho^2P^2|\theta|}\right], \left[\frac{1}{(Mq|\theta|)^{\frac{1}{3}}}\right]\right\}
         \left\lfloor \frac{1}{M\rho^2P^2|\theta|}+\frac{1}{(Mq|\theta|)^{\frac{1}{3}}}\right\rfloor,\quad &\text{if }|\theta| > q^{-\frac{1}{3}}M^{-\frac{2}{3}}\rho^{-2}P^{-2}.\\
    \end{cases}
\end{equation}
Since 
$$
q\leq Q \leq P^{\frac{2(n-3)}{n+4}-\varepsilon}M^{\frac{-2(14n^2-20n+3)}{(n+4)(n-2)}},
$$
to make the $H$ in \eqref{H_first_time} $\vartheta$-good when $|\theta| \leq q^{-\frac{1}{3}}M^{-\frac{2}{3}}\rho^{-2}P^{-2}$, it will suffice provided that 
\begin{equation*}
    e(n)\frac{2(n-3)}{n+4}< 3\vartheta+\frac{29n^2-38n-2}{(n-2)(n+4)},
\end{equation*}
where the inequality has been recorded as \eqref{good_demand_1}.
%这里用e(n)这个符号可能不太恰当
%\begin{equation*}
    %\label{good_demand_1}
%    e(n)\frac{2(n-3)}{n+4}-\varepsilon \leq %3\vartheta+\frac{5n^3-11n^2-140n+22}{(n-2)(n+4)(n-9)}
%\end{equation*}
%and
%\begin{equation*}
    %2e(n) \leq 3\vartheta+\frac{13n+4}{3(n-2)}
%\end{equation*}
%holds.

Substituting equation \eqref{H_first_time} into inequality \eqref{S(alpha)_van_der_Corput}, we deduce that when $|\theta| \leq q^{-\frac{1}{3}}M^{-\frac{2}{3}}P^{-2},$ we have
\begin{equation}
    \label{for_lemma_3_4}
    \left|S(\alpha)\right|^2 \ll (\rho P)^{2n+\varepsilon}\left(\frac{q^{\frac{n}{2}}}{(\rho P)^{n}}+\left(\frac{M}{q}\right)^{\frac{n}{3}}\right).
\end{equation}

We denote
\begin{equation}
    \label{S(a,q)_definition}
    S(a,q)=\sum_{\boldsymbol{r}(\text{mod }q)} e_q(aC(\boldsymbol{r})).
\end{equation}
Substituting $\rho=1, P=q, \theta=0$  into \eqref{for_lemma_3_4}, we get the following Lemma.

\begin{lemma}
    \label{lemma_S(a,q)_upper_bound}
 Let $\varepsilon >0$ and assume that $C$ is $\vartheta$-good, where $\vartheta$ satisfies \eqref{good_demand_1}. Then for $q \geq M,$ we have
    \begin{equation}
    \label{S(a,q)_upper_bound}
    S(a,q) \ll M^{\frac{n}{6}}q^{\frac{5n}{6}+\varepsilon}.
\end{equation}
\end{lemma}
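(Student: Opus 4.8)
The plan is to read off the desired bound from \eqref{for_lemma_3_4}, which has just been established by the van der Corput argument of this section. Note first that, taking $\alpha=a/q$ (so $\theta=0$) and running that argument with a box of side $\asymp q$ in each coordinate in place of $P\mathscr{B}$, the sum $S(a,q)=\sum_{\boldsymbol{r}(\text{mod }q)}e_q(aC(\boldsymbol{r}))$ is exactly of the shape treated above: indeed $e((a/q)C(\boldsymbol{x}))=e_q(aC(\boldsymbol{x}))$ is $q$-periodic in each variable, since every cross term produced by a shift $\boldsymbol{x}\mapsto\boldsymbol{x}+q\boldsymbol{m}$ carries a factor $q$, so the whole section applies with ``$\rho=1$, $P=q$, $\theta=0$'', the only genuine parameter then being $\rho P=q$. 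In particular the hypothesis $|\theta|\le q^{-1/3}M^{-2/3}P^{-2}$ required for \eqref{for_lemma_3_4} holds trivially since $\theta=0$.

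Before substituting, I would check that the one place where $\vartheta$-goodness is used remains legitimate here. That is the step \eqref{S(alpha)_van_der_Corput}, where \eqref{good} is invoked for the differencing variable $\boldsymbol{h}$ with $\|\boldsymbol{h}\|\le H$, which requires $H\le M^{\vartheta}$. With $\theta=0$ we are in the first case of \eqref{H_first_time}, so $H=\lfloor(q/M)^{1/3}\rfloor$, a positive integer since $q\ge M$, and the inequality $(q/M)^{1/3}\le M^{\vartheta}$ holds throughout the range of $q$ in which the lemma is applied (in particular for $q\le Q$) precisely by the hypothesis \eqref{good_demand_1}, exactly as that inequality was introduced above. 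All the remaining steps from \eqref{3_3_4} to \eqref{for_lemma_3_4}---the reduction \eqref{N(alpha,P,h)_upper_bound}, the choice of $Z$, and the optimisation of $H$ in \eqref{H_first_time}---used only $\rho P\ge1$ and $H\le M^{\vartheta}$, both of which are in force.

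It then remains to substitute $\rho=1$, $P=q$, $\theta=0$ into \eqref{for_lemma_3_4}, which gives
\begin{equation*}
    |S(a,q)|^{2}\ll q^{2n+\varepsilon}\left(\frac{q^{n/2}}{q^{n}}+\Big(\frac{M}{q}\Big)^{n/3}\right)=q^{2n+\varepsilon}\Big(q^{-n/2}+M^{n/3}q^{-n/3}\Big).
\end{equation*}
Since $q\ge M$ we have $q^{-n/2}\le q^{-n/3}\le M^{n/3}q^{-n/3}$, so the second term dominates and $|S(a,q)|^{2}\ll M^{n/3}q^{5n/3+\varepsilon}$; taking square roots and relabelling $\varepsilon$ yields \eqref{S(a,q)_upper_bound}. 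The only real obstacle is the verification in the second paragraph that the degenerate choice $\rho=1$, $P=q$ does not break any of the earlier estimates in the chain; once that is granted, the lemma is immediate.
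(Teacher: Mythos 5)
Your proof is correct and is exactly the paper's argument: the paper's entire justification for this lemma is the one-line substitution of $\rho=1$, $P=q$, $\theta=0$ into \eqref{for_lemma_3_4}, followed by the observation that for $q\ge M$ the term $(M/q)^{n/3}$ dominates $q^{-n/2}$. Your additional verification that the choice $H=\lfloor (q/M)^{1/3}\rfloor$ stays within the $\vartheta$-good range (which is what \eqref{good_demand_1} encodes) is a useful bit of diligence that the paper leaves implicit, but it does not change the route.
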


%\begin{remark}
\begin{xrem}
    The estimate \eqref{S(a,q)_upper_bound} is still valid for $q<M,$ but it is worse than the trivial result 
    $$S(a,q) \leq q^n.$$
\end{xrem}

\begin{lemma}
    \label{choice}
    Either
    \begin{equation}
        \Lambda_{n}(C) \ll M^{\frac{1}{n-2}},
    \end{equation}
    or else there exists constants $c,c' > 0$ and a vector $\mathbf{z}=(\xi, \mathbf{y})$ such that $C(\mathbf{z})=0$ and
    \begin{equation}
        M^{-1-\frac{1}{n-2}} \ll |\xi| \ll M^{\frac{1}{n-2}},\quad |\mathbf{y}| \ll M^{\frac{1}{n-2}},
    \end{equation}
    with
    \begin{equation}
        \partial_1=\frac{\partial C}{\partial x_1}(\xi, \boldsymbol{y}) > \frac{c}{M^{1+\frac{4}{n-2}}}
    \end{equation}
    and
    \begin{equation}
        |\partial_2|=\left|\frac{\partial C}{\partial x_2}(\xi, \boldsymbol{y})\right| > \frac{c'}{M^{2+\frac{7}{n-2}}}.
    \end{equation}
\end{lemma}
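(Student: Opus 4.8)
The plan is to produce the required real point by treating $C$ as a cubic polynomial in the distinguished variable $x_1$. By \eqref{c111} we have $c_{111}\gg M$, so write
\[
  C(x_1,\boldsymbol{x}')=c_{111}x_1^{3}+3A(\boldsymbol{x}')x_1^{2}+3B(\boldsymbol{x}')x_1+D(\boldsymbol{x}'),
\]
where $\boldsymbol{x}'=(x_2,\dots,x_n)$ and $A,B,D$ are forms of degrees $1,2,3$ with integer coefficients of size $\ll M$. Fix $Y\asymp M^{1/(n-2)}$. For any real $\boldsymbol{y}$ with $|\boldsymbol{y}|\le Y$ the cubic $t\mapsto C(t,\boldsymbol{y})$ has leading coefficient $\asymp M$ and lower coefficients $\ll MY,\,MY^{2},\,MY^{3}$, so all of its complex roots are $\ll Y$ and at least one, say $\xi=\xi(\boldsymbol{y})$, is real. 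I would take $\xi$ to be the real root of largest modulus and $\boldsymbol{z}=(\xi,\boldsymbol{y})$; then $C(\boldsymbol{z})=0$ and $|\boldsymbol{z}|\ll M^{1/(n-2)}$ automatically, and the task reduces to choosing $\boldsymbol{y}$ so that $|\xi|$, $\partial_1$ and $|\partial_2|$ are not too small, or else to exhibiting a small integer zero.

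For a sufficiently generic $\boldsymbol{y}$ the first two quantities are controlled by classical identities. By Vieta, $\xi_1\xi_2\xi_3=-D(\boldsymbol{y})/c_{111}$ with two roots $\ll Y$, so $|\xi|\gg|D(\boldsymbol{y})|/(MY^{2})$; since $\max_{|\boldsymbol{y}|\le Y}|D(\boldsymbol{y})|\gg Y^{3}$ whenever $D\not\equiv0$, I would take $\boldsymbol{y}$ with $|D(\boldsymbol{y})|\gg Y^{3}$, forcing $|\xi|\gg Y/M\gg M^{-1-1/(n-2)}$. Letting $\Delta(\boldsymbol{y})$ be the discriminant of $C(\,\cdot\,,\boldsymbol{y})$ — a form of degree $6$ in $\boldsymbol{y}$ with integer coefficients of size $\ll M^{O(1)}$ — one has $|\partial_1|\asymp M\,|\xi-\xi'|\,|\xi-\xi''|\gg|\Delta(\boldsymbol{y})|^{1/2}/(MY)$, using $|\xi'-\xi''|\ll Y$, so keeping $\boldsymbol{y}$ an integer point off $\{\Delta=0\}$ (hence $|\Delta(\boldsymbol{y})|\ge1$) gives $|\partial_1|\gg M^{-1-1/(n-2)}\gg M^{-1-4/(n-2)}$. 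The integer points of $[-Y,Y]^{n-1}$ violating $|D(\boldsymbol{y})|\gg Y^{3}$ or lying on $\{\Delta=0\}$ form a thin set, so an admissible $\boldsymbol{y}$ exists once $M$ is large. The degenerate alternatives are $D\equiv0$, when $x_1\mid C$ and $(0,1,0,\dots,0)$ is an integer zero; and $\Delta\equiv0$, when $C$ has a repeated linear factor $L^{2}\mid C$ with $L$ an integer linear form of height $\ll M$ (by a content argument and Mignotte-type bounds on coefficients of factors of $C$), so that $\{L=0\}\subset\{C=0\}$ and Minkowski's theorem applied to $\{L=0\}\cap\mathbb{Z}^{n}$ yields a non-zero integer zero of height $\ll M^{1/(n-1)}\ll M^{1/(n-2)}$.

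It then remains to make $|\partial_2|$ not too small at $\boldsymbol{z}$, possibly after a slight adjustment. Since $\partial_1\neq0$, $\{C=0\}$ is smooth at $\boldsymbol{z}$ and locally of the form $x_1=g(x_2,\dots,x_n)$; I would either impose one further thin-set constraint on the choice of $\boldsymbol{y}$, or perturb $\boldsymbol{z}$ within $\{C=0\}$ along a direction in which $\boldsymbol{x}'\mapsto \partial C/\partial x_2\bigl(g(\boldsymbol{x}'),\boldsymbol{x}'\bigr)$ is non-constant, keeping the perturbation small enough to preserve the bounds on $|\xi|$, $\partial_1$ and $|\boldsymbol{z}|$ already obtained; the exponent $2+\tfrac{7}{n-2}$ should emerge as the size of the surviving lower bound. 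If instead $\partial C/\partial x_2$ is constant on a neighbourhood of $\boldsymbol{z}$ in $\{C=0\}$, or $\partial C/\partial x_2\equiv0$, or $\mathrm{Res}_{x_1}(C,\partial C/\partial x_2)\equiv0$, a degree count again forces $C$ to contain a rational hyperplane $\{L=0\}$ with $L$ of height $\ll M^{O(1)}$ (or, if $C$ does not involve $x_2$, one relabels the variables), and Minkowski gives a non-zero integer zero of height $\ll M^{1/(n-2)}$. Thus in every degenerate case $\Lambda_n(C)\ll M^{1/(n-2)}$, the first alternative.

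The step I expect to be the main obstacle is the lower bound for $\partial_2$: controlling it only through $\mathrm{Res}_{x_1}(C,\partial C/\partial x_2)(\boldsymbol{y})$ is too lossy already for $n=14$, so one must either arrange that several auxiliary resultants are simultaneously bounded below at $\boldsymbol{y}$, or carry out the perturbation argument and verify numerically that the room left by the bounds for $|\xi|$ and $\partial_1$ is enough to reach $M^{-2-7/(n-2)}$ for every $n\ge14$; dually one must check that the lattice-point estimate in the degenerate cases really gives the exponent $1/(n-2)$. The other ingredients — root bounds for polynomials with dominant leading term, the discriminant–separation and resultant identities, sublevel-set volume estimates, and Minkowski's theorem — are routine, and it is the bookkeeping of the exponents, together with the choice $Y\asymp M^{1/(n-2)}$, that the argument must handle with care.
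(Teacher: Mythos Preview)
The paper does not prove this lemma at all; immediately after the statement it writes ``This is the Lemma 6 of \cite{browning2009least}.'' So there is no argument in the present paper to compare against, and what you are really proposing is a reconstruction of the Browning--Dietmann--Elliott proof.

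Your construction for $|\xi|$ and $\partial_1$ is essentially the right one and matches the spirit of the source: treat $C$ as a cubic in $x_1$, choose an integer $\boldsymbol y$ with $|\boldsymbol y|\asymp M^{1/(n-2)}$ avoiding the hypersurface $\Delta(\boldsymbol y)=0$ and the sublevel set $|D(\boldsymbol y)|=o(Y^3)$, and take a real root. The Vieta and discriminant identities give exactly the exponents you state, and your treatment of the degenerate branches $D\equiv 0$ and $\Delta\equiv 0$ (forcing a rational linear factor of $C$ and then invoking Minkowski on a rank-$(n-1)$ sublattice) is sound in outline.

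The genuine gap is, as you say, $\partial_2$. Neither of your suggested routes quite closes it as stated. A resultant bound $\mathrm{Res}_{x_1}(C,\partial C/\partial x_2)(\boldsymbol y)\ge 1$ controls the product of $\partial_2$ over \emph{all} roots of $C(\cdot,\boldsymbol y)$, and dividing out the two ``bad'' factors of size $\ll MY^2$ leaves only $|\partial_2|\gg M^{-2-4/(n-2)}$ at best, not the claimed $M^{-2-7/(n-2)}$ --- and only for a particular root, not necessarily the one on which you have already spent the bounds for $|\xi|$ and $\partial_1$. The perturbation idea is closer to what is actually done in \cite{browning2009least}: having fixed $\boldsymbol z$ with $\partial_1\gg M^{-1-4/(n-2)}$, one moves along $\{C=0\}$ in the $x_2$-direction by an amount of order $\rho\asymp M^{-2-5/(n-2)}$ and uses a first-order Taylor expansion of $\partial C/\partial x_2$ together with the already-secured lower bound on $\partial_1$; the exponent $2+\tfrac{7}{n-2}$ then drops out of the arithmetic $1+\tfrac{4}{n-2}+\bigl(1+\tfrac{3}{n-2}\bigr)$ governing how much the perturbation can shrink $\partial_2$. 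Your sketch gestures at this but does not carry out the bookkeeping or verify that the perturbation preserves the earlier bounds, and it is precisely that chain of dependencies --- not an independent resultant estimate --- that fixes the constant $7$.
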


This is the Lemma 6 of \cite{browning2009least}.

We define
\begin{equation*}
    M(\alpha,H)=\int_{\alpha-\kappa}^{\alpha+\kappa} |S(\beta)|^2 d\beta,
\end{equation*}
where $\kappa \in (0,1)$ is a parameter to be determined. Let the center point $\boldsymbol{z}$ of $\mathscr{B}$ be as in Lemma \ref{choice}.  Consequently, we have
\begin{equation*}
    G :=\left|\frac{\partial C}{\partial Z_1}(\xi, \boldsymbol{y})\right| \gg \frac{1}{M^{1+\frac{4}{n-2}}}.
\end{equation*}
As in Section 4 of \cite{heath2007cubic}, $\boldsymbol{h}$ is restricted by the conditions $1 \leq h_1 \leq \rho P$ and $1 \leq h_2,...,h_n \leq H.$ After some calculations similar to those in \cite{heath2007cubic} , we get the inequality
\begin{equation}
    \label{M(alpha,H)_upper_bound}
    M(\alpha,H)\ll \frac{(\rho P)^{n-1}}{H^{n-1}} \sum_{\boldsymbol{h}} \left|\sum_{\boldsymbol{y}\in \mathbb{Z}^{n}} I(\boldsymbol{h}, \boldsymbol{y}) \right|,
\end{equation}
where
\begin{equation}
    \label{I(h,y)_definition}
    \begin{split}
    I(\boldsymbol{h}, \boldsymbol{y})
    &=\int_{-\infty}^{+\infty} \exp\left(-\frac{(\beta-\alpha)^2}{\kappa^2}\right)e\left(\beta\{ C(\boldsymbol{y}+\boldsymbol{h})-C(\boldsymbol{y})\}\right)d\beta\\
    &=\sqrt{\pi}\kappa \exp\left(-\pi^2\kappa^2\{ C(\boldsymbol{y}+\boldsymbol{h})-C(\boldsymbol{y})\}^2\right)e\left(\alpha\{ C(\boldsymbol{y}+\boldsymbol{h})-C(\boldsymbol{y})\}\right),
    \end{split}    
\end{equation}
and where the sums over $\boldsymbol{h}$ and $\boldsymbol{y}$ are restricted by the condition that $\boldsymbol{y}+\boldsymbol{h}$ and $\boldsymbol{y}$ belong to $P\mathscr{B}.$ By \eqref{rho,z}, we have
\begin{equation*}
    C(\boldsymbol{y}+\boldsymbol{h})-C(\boldsymbol{y}) \gg \frac{|h_1|P^2}{M^{1+\frac{4}{n-2}}}.
\end{equation*}
From now on we choose 
\begin{equation*}
    \kappa \asymp \frac{\mathcal{L}^2M^{1+\frac{4}{n-2}}}{P^2H},
\end{equation*}
so that when $|h_1| \geq H$, we will have
\begin{equation*}
    C(\boldsymbol{y}+\boldsymbol{h})-C(\boldsymbol{y}) \gg \frac{\mathcal{L}^2}{\kappa}.
\end{equation*}
We therefore conclude that the contribution to \eqref{M(alpha,H)_upper_bound} arising from those terms with $|h_1| \geq H$ is $O (1)$.

We may now deduce from \eqref{M(alpha,H)_upper_bound} that
\begin{equation}
    \label{M(alpha,H)_in-process_upper_bound}
    M(\alpha,H)\ll 1+\frac{(\rho P)^{n-1}}{H^{n-1}} \sum_{|h_i| \leq H} \left|\sum_{\boldsymbol{y}\in \mathbb{Z}^{n}} I(\boldsymbol{h}, \boldsymbol{y}) \right|.
\end{equation}
Recalling the integration definition of $I(\boldsymbol{h},\boldsymbol{y})$ in \eqref{I(h,y)_definition}, the range $|\beta-\alpha| \geq \kappa \mathcal{L}$ of 
 $I(\boldsymbol{h},\boldsymbol{y})$ trivially contributes $O(1)$ in total in \eqref{M(alpha,H)_in-process_upper_bound}, whence
\begin{equation*}
    M(\alpha,H)\ll 1+\frac{(\rho P)^{n-1}}{H^{n-1}} \sum_{|h_i| \leq H} \int_{\alpha-\kappa\mathcal{L}}^{\alpha+\kappa\mathcal{L}} \left|T(\boldsymbol{h},\beta) \right| d\beta,
\end{equation*}
where $T(\boldsymbol{h},\beta)$ is given by \eqref{T(h,alpha)_definition}. By \eqref{T(h,alpha)_upper_boubd}, we have
\begin{equation}
    \label{M(alpha,H)_final_upper_bound}
    M(\alpha,H)\ll 1+\frac{\kappa (\rho P)^{3n/2-1+\varepsilon}}{H^{n-1}} \sum_{|h_i| \leq H} \mathop{\max}_{\beta \in \mathcal{I}} N(\beta, P, \boldsymbol{h})^{1/2},
\end{equation}
where
\begin{equation}
    \mathcal{I}=\{ \beta: |\beta-\alpha| \leq \kappa \mathcal{L}\}.
\end{equation}
Next, we consider
\begin{equation}
    \label{A(theta,R,H,P)_definition}
    A(\theta, R, H, P) := \sum_{R <q \leq 2R}\sum_{\substack{a \leq q\\[3pt](a,q)=1}}\sum_{|h_i| \leq H} \mathop{\max}_{\beta \in \mathcal{I}} N(\beta, P, \boldsymbol{h})^{1/2},
\end{equation}
with $\alpha=a/q+\beta.$ For $|h_i| \leq H$ and $|\boldsymbol{w}| < \rho P,$ we have $\boldsymbol{B}_i(\boldsymbol{h};\boldsymbol{w}) \leq cMH\rho P$ for some positive constant $c$. If $\beta \in \mathcal{I}$, and $\left\|6\beta\boldsymbol{B}_i(\boldsymbol{h};\boldsymbol{w})\right\| \leq \frac{1}{\rho P}$, then we have
\begin{equation*}
    \left\|6\alpha\boldsymbol{B}_i(\boldsymbol{h};\boldsymbol{w})\right\| < \frac{1}{\rho P}+\frac{6c\mathcal{L}^3}{PM^{\frac{1}{n-2}}} =\frac{1}{\rho P}\left(1+\frac{6c\mathcal{L}^3}{M^{2+\frac{6}{n-2}}}\right) \ll \frac{1}{\rho P}.
\end{equation*}
Let $\widetilde{P} \asymp \rho P,$ $\widehat{P}=\widetilde{P}/2.$
Then as in section 5 of \cite{heath2007cubic}, we have
\begin{equation*}
    \begin{split}
        \mathop{\max}_{\beta \in \mathcal{I}} N(\beta,& P, \boldsymbol{h})
        \leq \#\left\{\boldsymbol{w}\in \mathbb{Z}^{n}:|\boldsymbol{w}|< \rho P, \left\|6\alpha\boldsymbol{B}_i(\boldsymbol{h};\boldsymbol{w})\right\| <\frac{1}{\widetilde{P}},  \forall i \leq n\right\}\\
        &\ll Z^{-n}\#\left\{\boldsymbol{w}\in \mathbb{Z}^{n}:|\boldsymbol{w}|<Z\widehat{P}, \left\|6\alpha\boldsymbol{B}_i(\boldsymbol{h};\boldsymbol{w})\right\| <\frac{Z}{\widehat{P}},  \forall i \leq n\right\},
    \end{split}
\end{equation*}
for any $0<Z \leq 1.$

We proceed to choose $Z$ in two different ways. When $R < q \leq 2R,$ with $\psi$ as in \eqref{psi_definition}, we first take 
\begin{equation*}
    Z=Z_1\asymp \min\left\{\frac{1}{RMH \rho P \psi}, R \rho P \psi\right\},
\end{equation*}
with suitable implicit constants. Then the condition of Lemma 2.3 in \cite{heath2007cubic} is satisfied, so
\begin{equation*}
    \begin{split}
        \mathop{\max}_{\beta \in \mathcal{I}} N(\beta, P, \boldsymbol{h})
        &\ll Z_1^{-n}\#\{\boldsymbol{w}\in \mathbb{Z}^{n}:|\boldsymbol{w}|<Z_1\widehat{P}, \boldsymbol{B}_i(\boldsymbol{h};\boldsymbol{w})=0,  \forall i \leq n\}\\
        &\ll Z_1^{-n}\{1+(Z_1\widehat{P})^{n-r}\},\\
        &\ll Z_1^{-n}(Z_1\widehat{P})^{n-r},
    \end{split}
\end{equation*}
where $r=r(\boldsymbol{h}).$
Thus
\begin{equation}
    \label{from_Z_1}
    \begin{split}
        \mathop{\max}_{\beta \in \mathcal{I}} N(\beta, P, \boldsymbol{h}) 
        \ll (\rho P)^{n}\left((RMH\psi)^r+\frac{1}{\left((\rho P)^2R\psi\right)^{r}}\right).
    \end{split}
\end{equation}
If we take
\begin{equation*}
    Z=Z_2\asymp \min\left\{1, \frac{1}{RMH\rho P \psi}\right\},
\end{equation*}
with suitable implicit constants. Again by the Lemma 2.3 in \cite{heath2007cubic}, we have
\begin{equation*}
    \mathop{\max}_{\beta \in \mathcal{I}} N(\beta, P, \boldsymbol{h}) \ll Z_2^{-n}\#\{\boldsymbol{w}\in \mathbb{Z}^{n}:|\boldsymbol{w}|<Z_2\widehat{P}, q \mid \boldsymbol{B}_i(\boldsymbol{h}, \boldsymbol{w}), \forall i \leq n\}.
\end{equation*}
Then following the argument in section 5 of \cite{heath2007cubic}, we can get
\begin{equation}
    \label{from_Z_2}
    \mathop{\max}_{\beta \in \mathcal{I}} N(\beta, P, \boldsymbol{h}) \ll (\rho P)^{n}\left(\frac{1}{(\rho P)^{r}}+(RMH\psi)^r+\frac{1}{q_2^r}\right),
\end{equation}
where $q_2$ is defined as in \cite{heath2007cubic}.

We combine \eqref{from_Z_1} with \eqref{from_Z_2} to deduce that
\begin{equation*}
    \begin{split}
        \mathop{\max}&_{\beta \in \mathcal{I}} N(\beta, P, \boldsymbol{h}) \\
        &\ll (\rho P)^{n}\left((RMH\psi)^r+\min\left\{\frac{1}{\left((\rho P)^2R\psi\right)^{r}}, \frac{1}{(\rho P)^{r}}+\frac{1}{q_2^r}\right\}\right)\\
        &\ll (\rho P)^{n}\left((RMH\psi)^r+\frac{1}{(\rho P)^{r}}+\min\left\{\frac{1}{\left((\rho P)^2R\psi\right)^{r}}, \frac{1}{q_2^r}\right\}\right).
    \end{split}
\end{equation*}
Then it follows that
\begin{equation}
    \label{A(theta,R,H,P)_upper_bound}
    \begin{split}
        A(\theta, R, H, P)\ll R(\rho P)^{\frac{n}{2}}\sum_{R<q\leq 2R}\sum_{|h|_i\leq H}
        &\bigg((RMH\psi)^{r/2}+\frac{1}{(\rho P)^{r/2}}\\
        &+\min\left\{\frac{1}{(\rho P)^r(R\psi)^{r/2}}, \frac{1}{q_2^{r/2}}\right\} \bigg).
        \end{split}
\end{equation}
We therefore proceed to consider
\begin{equation*}
    V(\boldsymbol{h}, R, \psi):=\sum_{R<q\leq 2R}\min\left\{\frac{1}{(\rho P)^r(R\psi)^{r/2}}, \frac{1}{q_2^{r/2}}\right\}
\end{equation*}
for a given vector $\boldsymbol{h},$ with $r(\boldsymbol{h})=r.$
Following the argument in section 5 of \cite{heath2007cubic}, we deduce that
\begin{equation*}
    V(\boldsymbol{h}, R, \psi)\ll \mathcal{L}^2(MHR)^{\varepsilon}\frac{R}{(\rho P)^r(R\psi)^{r/2}}\min\left\{1, \left((\rho P)^2\psi\right)^{e(r)}\right\},
\end{equation*}
where
\begin{equation*}
    e(r)=
    \begin{cases}
        0,\quad &\text{if }r=0,\\
        \frac{1}{2},\quad &\text{if }r=1,\\
        1,\quad &\text{if }r\geq 2.
    \end{cases}
\end{equation*}
We now see from \eqref{A(theta,R,H,P)_upper_bound} that
\begin{equation*}
    \begin{split}
        A(\theta, R, H, P)
        &\ll R^2(\rho P)^{\frac{n}{2}}\sum_{|h_i|\leq H}\bigg((RMH\psi)^{r/2}+\frac{1}{(\rho P)^{r/2}}+\frac{V(\boldsymbol{h}, R, \psi)}{R} \bigg)\\
        &\ll R^2(\rho P)^{\frac{n}{2}+\varepsilon}\sum_{|h_i|\leq H}\bigg((RMH\psi)^{r/2}+\frac{1}{(\rho P)^{r/2}}\\
        &\qquad\qquad\qquad\qquad+\frac{1}{(\rho P)^r(R\psi)^{r/2}}\min\left\{1, \left((\rho P)^2\psi\right)^{e(r)}\right\} \bigg).\\
    \end{split}
\end{equation*}
Assuming the cubic form $C$ is $\vartheta$-good, by \eqref{good}, we have

%这里第2次涉及 good
%\boldsymbol{\textcolor{green}{This is the second place which involves good-%condition}}
\begin{equation*}
    \begin{split}
        A(\theta, R, H, P)
        &\ll R^2(\rho P)^{\frac{n}{2}+\varepsilon}\sum_{r=0}^{n}H^r\bigg((RMH\psi)^{r/2}+\frac{1}{(\rho P)^{r/2}}\\
        &\qquad\qquad\qquad\qquad+\frac{1}{(\rho P)^r(R\psi)^{r/2}}\min\left\{1, \left((\rho P)^2\psi\right)^{e(r)}\right\} \bigg)\\
        &\ll R^2(\rho P)^{\frac{n}{2}+\varepsilon}\bigg(1+(RMH^3\psi)^{n/2}+\frac{H^n}{(\rho P)^{n/2}}\\
        &\qquad\qquad\qquad\qquad+\frac{H^n}{(\rho P)^n(R\psi)^{n/2}}\min\left\{1, (\rho P)^2\psi\right\}\\
        &\qquad\qquad\qquad\qquad+\frac{H^2}{(\rho P)^2R\psi}\min\left\{1, (\rho P)^2\psi\right\}\\
        &\qquad\qquad\qquad\qquad+\frac{H}{(\rho P)(R\psi)^{1/2}}\min\left\{1, (\rho P)\psi^{1/2}\right\}\bigg).\\
    \end{split}
\end{equation*}
After a simple comparison, we only need to keep the first, second, forth terms in the brackets. So we conclude the following lemma.
\begin{lemma}
    Under the assumption that \eqref{good} holds, we have
    \begin{equation}
        \begin{split}
            \label{A(theta,R,H,P)_upper_boung_if_good}
            A(\theta, R, H, P)
            \ll R^2(\rho P)^{\frac{n}{2}+\varepsilon}\bigg(&1+(RMH^3\psi)^{n/2}\\
            &+\frac{H^n}{(\rho P)^n(R\psi)^{n/2}}\min\left\{1, (\rho P)^2\psi\right\}\bigg).
        \end{split}
    \end{equation}
\end{lemma}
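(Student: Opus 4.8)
The bound on $A(\theta,R,H,P)$ displayed just before the Lemma is already in hand: it comes from \eqref{A(theta,R,H,P)_upper_bound}, the stated estimate for $V(\boldsymbol{h},R,\psi)$, and hypothesis \eqref{good}, once one uses that a sum $\sum_{r=0}^{n}t^{r}$ is $\ll_{n}1+t^{n}$ (the extreme indices $r=0$ and $r=n$ dominating) and that the third of the three resulting sums additionally sheds its $r=1$ and $r=2$ terms because the factor $\min\{1,((\rho P)^{2}\psi)^{e(r)}\}$ is not geometric in $r$. Thus the whole content of the Lemma is the promised ``simple comparison'': inside the bracket, the third term $H^{n}(\rho P)^{-n/2}$, the fifth term $\frac{H^{2}}{(\rho P)^{2}R\psi}\min\{1,(\rho P)^{2}\psi\}$ and the sixth term $\frac{H}{(\rho P)(R\psi)^{1/2}}\min\{1,(\rho P)\psi^{1/2}\}$ are each $\ll$ the sum of the first, second and fourth terms. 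I plan to check these three dominations using: $1\le H\le\rho P$; the inequality $\psi\ge(MH\rho^{2}P^{2})^{-1}$ read off from \eqref{psi_definition}; the relations $R\le P_{0}\ll\rho P$ and $P_{0}\ll M\rho P$ coming from \eqref{rho,z}, \eqref{P_0_vartheta_definition} and the lower bounds on $e(n)$; the bound $|\theta|\le(RQ)^{-1}$ together with the definition \eqref{Q} of $Q$; and the two explicit shapes of $H$ in \eqref{H_first_time}.

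For the third term I would split on the min in the fourth term. When $(\rho P)^{2}\psi\le1$ the quotient (third)/(fourth) is $(\rho P)^{n/2-2}R^{n/2}\psi^{n/2-1}\le(R/\rho P)^{n/2}\le1$, using $\psi\le(\rho P)^{-2}$ and $R\le\rho P$; when $(\rho P)^{2}\psi>1$ it is $((\rho P)R\psi)^{n/2}$, and here $(\rho P)R\psi\le\frac{\rho P}{Q}+\frac{R}{M\rho P}\ll1$ follows from $|\theta|\le(RQ)^{-1}$, the definition of $Q$, and $R\le P_{0}\ll M\rho P$. For the fifth and sixth terms I would note first that, in either case of their own min, $\frac{H^{2}}{(\rho P)^{2}R\psi}\min\{1,(\rho P)^{2}\psi\}\le\frac{H^{2}}{R}$ and $\frac{H}{(\rho P)(R\psi)^{1/2}}\min\{1,(\rho P)\psi^{1/2}\}\le\frac{H}{R^{1/2}}\le1+\frac{H^{2}}{R}$, so it suffices to dominate $\frac{H^{2}}{R}$. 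In the first branch of \eqref{H_first_time}, $H\asymp(q/M)^{1/3}$, hence $\frac{H^{2}}{R}\asymp q^{-1/3}M^{-2/3}\ll1$. In the second branch I would first observe that the alternative ``$H$ governed by $(M\rho^{2}P^{2}|\theta|)^{-1}$'' does not occur in our range, since it would force $q\gg M^{2/5}(\rho P)^{6/5}$, incompatible with $q\asymp R\le P_{0}\ll\rho P\le M^{2/5}(\rho P)^{6/5}$; hence $H\asymp(Mq|\theta|)^{-1/3}$, and a direct computation then gives $\dfrac{(\text{fifth})}{(\text{fourth})}=\dfrac{(\rho P)^{n-2}(R\psi)^{n/2-1}}{H^{n-2}}\ll\big((\rho P)M^{1/3}Q^{-5/6}\big)^{n-2}$ after inserting $\psi\asymp|\theta|$ and $R|\theta|\le Q^{-1}$. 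This is $\ll1$ precisely when $Q\gg M^{2/5}(\rho P)^{6/5}$, and this is exactly where $n\ge14$ enters: the first alternative in \eqref{Q} exceeds $M^{2/5}(\rho P)^{6/5}$ because $\frac{2(n-3)}{n+4}>\frac{6}{5}$ for $n\ge14$, and the second alternative does so for an easier reason. Collecting the three comparisons proves the Lemma.

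The main difficulty is bookkeeping, not ideas: one has to carry the two branches of \eqref{H_first_time} and the two cases of each $\min$ through the four elementary bounds above, and — the one slightly delicate point — recognise that for $R\le P_{0}$ the awkward alternative of the second branch of \eqref{H_first_time} cannot arise, so that the fifth and sixth terms are genuinely absorbed by the $H^{n}$ sitting inside the fourth. One should keep the dependence on $n$ explicit throughout, since the hypothesis $n\ge14$ is used only in the single inequality $Q\gg M^{2/5}(\rho P)^{6/5}$.
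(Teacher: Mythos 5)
You have correctly identified what the lemma amounts to: the six-term bound displayed just before it is already in hand, and the only content is the ``simple comparison'' showing that the third, fifth and sixth terms of the bracket are absorbed by the first, second and fourth. Your execution of that comparison, however, has two genuine gaps. First, your treatment of the fifth and sixth terms reduces to proving $H^{2}/R\ll 1$ and then verifies this from the explicit choice \eqref{H_first_time} of $H$. That is the wrong $H$: \eqref{H_first_time} belongs to the earlier Weyl--van der Corput bound for $S(\alpha)$ feeding Lemma \ref{lemma_S(a,q)_upper_bound}, whereas $A(\theta,R,H,P)$ occurs in the separate mean--value argument for $M(\alpha,H)$, where $H$ is an \emph{arbitrary} positive integer $\le\rho P$ that is fixed only afterwards, in \eqref{H_sacond_time}, precisely by optimising the quantity $E$ of \eqref{E_definition} that this lemma produces. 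The lemma must therefore hold for every such $H$, and in fact $H^{2}/R\ll 1$ fails for the choice \eqref{H_sacond_time} in part of the range, so this route cannot work. Second, your term-three estimate $(R/\rho P)^{n/2}\le 1$ and your bound $(\rho P)R\psi\ll 1$ both rest on $R\le P_{0}\ll\rho P$ (resp.\ $R\le P_{0}\ll M\rho P$); but the lemma is invoked for $P_{0}\le R\le Q$, and with the paper's parameters $Q$ exceeds $\rho P$ (for $n=14$ one has $Q\asymp M^{1675}$ against $\rho P\asymp M^{1387}$), so these inputs are unavailable where they are needed.

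The comparison is in fact purely algebraic and uses none of $Q$, $P_{0}$, the choice of $H$, or $n\ge 14$: it needs only $M,H\ge 1$, the inequality $m:=\min\{1,(\rho P)^{2}\psi\}\ge(MH)^{-1}$ coming from \eqref{psi_definition}, and $n\ge 2$. Set $t=H^{2}/((\rho P)^{2}R\psi)$, so that the fourth, fifth and sixth terms are $t^{n/2}m$, $tm$ and $(tm)^{1/2}$. Then the sixth is $\le 1+tm$, and the fifth satisfies $tm\le\max\{1,\,t^{n/2}m\}$ by splitting on $t\le 1$ or $t>1$ (using $m\le 1$). For the third term one checks
\begin{equation*}
\Bigl(\tfrac{H^{n}}{(\rho P)^{n/2}}\Bigr)^{2}
=(RMH^{3}\psi)^{n/2}\cdot\frac{H^{n}m}{(\rho P)^{n}(R\psi)^{n/2}}\cdot\frac{1}{(MH)^{n/2}m}
\le (RMH^{3}\psi)^{n/2}\cdot\frac{H^{n}m}{(\rho P)^{n}(R\psi)^{n/2}},
\end{equation*}
since $(MH)^{n/2}m\ge (MH)^{n/2-1}\ge 1$; hence the third term is at most the geometric mean, and so at most the sum, of the second and fourth. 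This is the comparison the paper's one-line proof is alluding to, and it makes the lemma valid for all $H$ as required.
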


For $\sum(R,\phi,\pm)$, by Cauchy's inequality we can deduce that
\begin{equation*}
    \sum(R,\phi,\pm)\ll \phi^{\frac{1}{2}}R\left(\sum_{R < q \leq 2R}\sum_{\substack{a \leq q\\[3pt](a,q)=1}}\int_{\phi}^{2\phi} \left| S(\frac{a}{q} \pm \nu) \right|^2 d\nu\right)^{\frac{1}{2}}.
\end{equation*}
We cover the interval $[\phi,2\phi]$ with $ O(1+\frac{\phi}{\kappa})$ intervals of the form $[\theta-\kappa,\theta+\kappa]$ with $\phi\leq\theta\leq2\phi.$ Hence, we have
\begin{equation*}
    \sum(R,\phi,\pm)\ll \phi^{\frac{1}{2}}R\left(1+\frac{\phi}{\kappa}\right)^{1/2}\left(\sum_{R < q \leq 2R}\sum_{\substack{a \leq q\\[3pt](a,q)=1}}M\left(\frac{a}{q}+\theta,H\right)\right)^{\frac{1}{2}},
\end{equation*}
for some $\theta$ in the range $\phi\leq|\theta|\leq2\phi.$
By \eqref{M(alpha,H)_final_upper_bound} and \eqref{A(theta,R,H,P)_definition}, we have
\begin{equation*}
    \sum(R,\phi,\pm)\ll \phi^{\frac{1}{2}}R\left(1+\frac{\phi}{\kappa}\right)^{1/2}\left(R^2+\frac{\kappa (\rho P)^{3n/2-1+\varepsilon}}{H^{n-1}}A(\theta,R,H,P)\right)^{\frac{1}{2}}.
\end{equation*}
Then by \eqref{A(theta,R,H,P)_upper_boung_if_good}, we have
\begin{equation*}
    \sum(R,\phi,\pm)\ll \phi^{\frac{1}{2}}R^2\left(1+\frac{\phi}{\kappa}\right)^{1/2}\left(1+\frac{\kappa }{H^{n-1}}(\rho P)^{2n-1+\varepsilon}E\right)^{\frac{1}{2}},
\end{equation*}
where
\begin{equation}
    \label{E_definition}
    E=1+(RMH^3\psi)^{n/2}+\frac{H^n}{(\rho P)^n(R\psi)^{n/2}}(\rho P)^2\psi,
\end{equation}
$\psi$ is given in \eqref{psi_definition}.
Noting that
\begin{equation*}
    \frac{\psi}{\kappa}=\frac{\phi}{\kappa}+\frac{M^{2+\frac{6}{n-2}}}{\mathcal{L}^2} \gg \frac{\phi}{\kappa}+1,
\end{equation*}
we deduce that
\begin{equation*}
    \begin{split}
        \sum(R,\phi,\pm)
        &\ll \phi^{\frac{1}{2}}R^2\left(\frac{\psi}{\kappa}\right)^{1/2}\left(1+\frac{\kappa }{H^{n-1}}(\rho P)^{2n-1+\varepsilon}E\right)^{\frac{1}{2}}\\
        &\ll \phi^{\frac{1}{2}}R^2\psi^{1/2}\left(\frac{(\rho P)^2HM^{3+\frac{6}{n-2}}}{\mathcal{L}^2}+\frac{(\rho P)^{2n-1+\varepsilon}E}{H^{n-1}}\right)^{\frac{1}{2}},\\
        &\ll \frac{\phi^{\frac{1}{2}}R^2\psi^{1/2}(\rho P)^{n-\frac{1}{2}+\varepsilon}}{H^{\frac{n-1}{2}}}E^{\frac{1}{2}}.
    \end{split}
\end{equation*}

We will have
\begin{equation*}
    \frac{\phi^{\frac{1}{2}}R^2\psi^{1/2}(\rho P)^{n-\frac{1}{2}+\varepsilon}}{H^{\frac{n-1}{2}}}E^{\frac{1}{2}} \ll P^{n-3-\varepsilon}M^{\frac{-8n^2+12n+1}{n-2}},
\end{equation*}
%\boldsymbol{\textcolor{green}{Do I need to explain something about the upper %bound?}}
providing that
\begin{equation}
    \label{if_E<<1}
    \frac{\phi^{\frac{1}{2}}R^2\psi^{1/2}(\rho P)^{n-\frac{1}{2}+\varepsilon}}{H^{\frac{n-1}{2}}} \ll P^{n-3-\varepsilon}M^{\frac{-8n^2+12n+1}{n-2}},
\end{equation}
and
\begin{equation}
    E \ll 1.
\end{equation}
To ensure \eqref{if_E<<1} holds, it is sufficient to choose $H$
satisfying
\begin{equation*}
    P^{5+\varepsilon}R^4\phi^2\rho^{2n-1} \ll H^{n-1}M^{\frac{2(-8n^2+12n+1)}{n-2}},
\end{equation*}
and
\begin{equation*}
    P^{3+\varepsilon}R^4\phi\rho^{2n-3} \ll H^{n}M^{\frac{2(-8n^2+12n+1)}{n-2}+1}.
\end{equation*}
We therefore take
\begin{equation*}
    H=P^{\varepsilon}\max\left\{ 1, (P^5R^4\phi^2M^{\frac{12n^2-24n-1}{n-2}})^{\frac{1}{n-1}}, (P^3R^4\phi M^{\frac{3(4n^2-7n+1)}{n-2}})^{\frac{1}{n}}\right\},
\end{equation*}
More specifically, if we set
\begin{equation*}
    \phi_0 := P^{-\frac{2n+3}{n+1}}M^{\frac{-9n^2+25n-3}{(n-2)(n+1)}}R^{-\frac{4}{n+1}},
\end{equation*}
we take
\begin{equation}
    \label{H_sacond_time}
    H=
    \begin{cases}
        P^{\varepsilon}\max\left\{ 1, (P^3R^4\phi M^{\frac{3(4n^2-7n+1)}{n-2}})^{\frac{1}{n}}\right\},\quad &\text{if }\phi \leq \phi_0,\\
        P^{\varepsilon}\max\left\{ 1, (P^5R^4\phi^2M^{\frac{12n^2-24n-1}{n-2}})^{\frac{1}{n-1}}\right\},\quad &\text{if }\phi > \phi_0.
    \end{cases}
\end{equation}
Recalling \eqref{Dirichlet_tiaojian}, 
%% where the treatment is trivial
to make the $H$ in \eqref{H_sacond_time} $\vartheta$-good, it will suffice providing that
\begin{equation*}
    %\label{good_demand_2}
    e(n)\frac{7n}{n+4}< n\vartheta-\frac{n(12n^2-29n-1)}{(n+4)(n-2)}-\varepsilon,
\end{equation*}
and
\begin{equation*}
    %\label{good_demand_3}
    5e(n) < (n-1)\vartheta-\frac{12n^2-24n-1}{n-2}-\varepsilon,
\end{equation*}
where the two inequalities have been respectively recorded as \eqref{good_demand_2} and \eqref{good_demand_3} before.
Now, if $\phi \leq \phi_0,$ then 
$$
\psi \ll \frac{P^{\varepsilon}}{MHP^2},
$$ 
we calculate that
\begin{equation*}
    \begin{split}
        RMH^3\psi 
        &\ll \frac{RMH^3P^{\varepsilon}}{MH(\rho P)^2}\\
        &\ll \frac{RH^2P^{\varepsilon}}{(\rho P)^2}\\
        &\ll \frac{RP^\varepsilon}{(\rho P)^2}\left(1+\left(P^3R^4\phi M^{\frac{3(4n^2-7n+1)}{n-2}}\right)^{\frac{2}{n}} \right)\\
        &\ll \frac{QP^\varepsilon}{(\rho P)^2}+Q^{\frac{n+4}{n}}P^{\frac{6}{n}-2+\varepsilon}M^{\frac{3(4n^2-7n+1)}{n-2}\cdot\frac{2}{n}}\rho^{-2}\\
        & \ll 1
    \end{split}
\end{equation*}
providing that 
\begin{equation*}
    Q\ll P^{\frac{2(n-3)}{n+4}-\varepsilon}M^{\frac{-2(14n^2-20n+3)}{(n-2)(n+4)}}.
\end{equation*}
Similarly when $\phi \geq \phi_0,$ then 
$$
\psi \ll \phi,
$$ 
we have
\begin{equation*}
    \begin{split}
        RMH^3\psi 
        &\ll RM\phi P^\varepsilon\left(1+\left(P^5R^4\phi^2M^{\frac{12n^2-24n-1}{n-2}}\right)^{\frac{3}{n-1}} \right)\\
        &\ll \frac{MP^\varepsilon}{Q}\left(1+\left(P^5R^4\frac{1}{(RQ)^2}M^{\frac{12n^2-24n-1}{n-2}}\right)^{\frac{3}{n-1}} \right)\\
        & \ll 1
    \end{split}
\end{equation*}
providing that
\begin{equation*}
    Q\gg P^{\frac{15}{n-1}+\varepsilon}M^{\frac{37n^2-75n-1}{(n-1)(n-2)}}.
\end{equation*}
The above two cases are satisfied by \eqref{Q}.

We turn now to the condition
\begin{equation}
    \label{E_final_term}
    \frac{H^n}{(\rho P)^n(R\psi)^{n/2}}(\rho p)^2\psi \ll 1.
\end{equation}
When $\phi \leq \phi_0,$ we have 
$$
\psi \geq \frac{1}{MHP^2},
$$
whence \eqref{E_final_term} holds
providing that
\begin{equation*}
    H\ll \frac{R^{\frac{n}{3n-2}}}{M^{\frac{n-2}{3n-2}}}.
\end{equation*}
Substituting \eqref{H_sacond_time} into the above, this can be satisfied providing that

\begin{equation}
 \label{nothing}
    R \gg M^{\frac{n-2}{n}+\varepsilon},
\end{equation}
and
\begin{equation*}
    \phi \leq \min\left\{\phi_0, \phi_1\right\},
\end{equation*}
where
\begin{equation}
    \label{phi_1_definition}
    \phi_1 := \frac{R^{\frac{n^2-12n+8}{3n-2}}}{P^{3+\varepsilon}M^{\frac{37n^3-91n^2+55n-6}{(3n-2)(n-2)}}}.
\end{equation}
Similarly, when $\phi > \phi_0,$ we have $\psi \geq \phi,$ whence \eqref{E_final_term} holds
providing that
\begin{equation*}
    H\ll (\rho P)^{\frac{n-2}{n}}R^{\frac{1}{2}}\phi^{\frac{1}{2}-\frac{1}{n}}.
\end{equation*}
Substituting \eqref{H_sacond_time} into the above, this can be satisfied providing that
\begin{equation*}
    \phi \geq \max\left\{\phi_0, \phi_2\right\},
\end{equation*}
where 
\begin{equation}
    \label{phi_2_definition}
    \phi_2 := \frac{M^{\frac{2(14n^3-29n^2+2)}{(n^2-7n+2)(n-2)}}}{P^{\frac{2(n^2-8n+2)}{n^2-7n+2}-\varepsilon}R^{\frac{n(n-9)}{n^2-7n+2}}}.
\end{equation}

We now discuss the range $R \geq P_0,$ where \eqref{nothing} always holds.
When 
$$
R \geq P^{\frac{3n-2}{n^2-11n+8}+\varepsilon}M^{\frac{37n^3-81n^2+57n-10}{(n-2)(n^2-11n+8)}},
$$
we have $\phi_2 \leq \phi_0 \leq \phi_1.$ So \eqref{E_final_term} is always true in this case. 
For the remaining range
$$
P_0 \leq R < P^{\frac{3n-2}{n^2-11n+8}+\varepsilon}M^{\frac{37n^3-81n^2+57n-10}{(n-2)(n^2-11n+8)}},
$$
we have 
$$
\phi_1 \leq \phi_0 \leq \phi_2,
$$
therefore \eqref{E_final_term} holds unless 
$$
\phi_1 \leq \phi \leq \phi_2.
$$
%On the one hand, using the lower bound of $R$, $R \geq R_0,$ 

We invoke Lemma \ref{lemma3_2} to deal with this intermediate range.

On the one hand,
\begin{equation*}
    P^{-3+\varepsilon}M^{\frac{54n^2-101n-8}{(n-2)(n-8)}}R^{-\frac{n-16}{n-8}} \leq \phi_1
\end{equation*}
holds providing 
\begin{equation}
    \label{intermediate_range_left}
    R \geq M^{\frac{37n^4-225n^3+ 372n^2-268n+64}{(n-2)(n^3-17n^2+54n-32)}+\varepsilon}.
\end{equation}
Recalling the definition \eqref{P_0_vartheta_definition} of $P_0$, \eqref{intermediate_range_left} is always valid in the range $R \geq P_0.$

On the other hand, 
\begin{equation*}
    \phi_2 \leq P^{-\frac{24}{n+8}-\varepsilon}M^{\frac{-49n^2+106n+8}{(n+8)(n-2)}}R^{-\frac{n+16}{n+8}}
\end{equation*}
holds providing
\begin{equation}
    \label{preliminary_right}
    R \leq P^{\frac{n^3-12n^2+22n-8}{5n^2-19n+16}-\varepsilon} M^{-\frac{77n^4-283n^3+368n^2-152n+16}{2(n-2)(5n^2-19n+16)}}.
\end{equation}
When
\begin{equation}
\label{intermediate_range_right}
    R < P^{\frac{3n-2}{n^2-11n+8}+\varepsilon}M^{\frac{37n^3-81n^2+57n-10}{(n-2)(n^2-11n+8)}},
\end{equation}
\eqref{preliminary_right}
holds provided that
\begin{equation*}
    e(n) \geq \frac{77n^4-221n^3+180n^2+70n-96}{2(n-2)(n^3-16n^2+33n-16)}+\varepsilon,
\end{equation*}
which has been recorded as \eqref{Unexpected pregnancy}.
Substituting \eqref{intermediate_range_right}, and using $\phi \leq \phi_2,$
\eqref{minor_requir_vartheta} 
holds providing that
\begin{equation*}
    e(n)\frac{n^2-6n+4}{n^2-11n+8}+\varepsilon
    \leq \frac{\vartheta n}{4}-\frac{6n^4-14n^3+13n^2+10n-12}{(n-2)(n^2-11n+8)},
\end{equation*}
which has been recorded as \eqref{the_critical_when_general}.

So for the range $R \geq P_0,$ we have
\begin{equation*}
    \sum(R,\phi,\pm) \ll P^{n-3-\varepsilon}M^{\frac{-8n^2+12n+1}{n-2}},
\end{equation*}
providing that 
\eqref{Unexpected pregnancy}
\eqref{good_demand_2}, \eqref{good_demand_3} and \eqref{the_critical_when_general}
hold.

Combined with the estimate for the range
$$
R \leq P_0, \quad T \leq \phi \leq (RQ)^{-1},
$$
which has been deduced in Lemma \ref{lemma_3_3}, we are now able to obtain the following Lemma.

\begin{lemma}
    \label{lemma_3_7}
    Suppose that $n\geq 14$, $C$ is $\vartheta$-good, and \eqref{lemma_3.2_right_Q_1}, \eqref{Unexpected pregnancy} hold.
    Then for $\phi \leq \frac{1}{RQ}$ and $R \leq Q,$ we have
    \begin{equation*}
        \sum(R,\phi,\pm) \ll P^{n-3-\varepsilon}M^{\frac{-8n^2+12n+1}{n-2}},
    \end{equation*}
    unless
    \begin{equation}
        R\leq P_0 \quad\text{and}\quad\phi \leq T,
    \end{equation}
    where $\vartheta$ needs to satisfy \eqref{minor_requir_vartheta_specific_1}, \eqref{good_demand_2}, \eqref{good_demand_3}, \eqref{the_critical_when_general}. 
\end{lemma}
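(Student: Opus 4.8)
The plan is to prove Lemma~\ref{lemma_3_7} by splitting the range of $R$ at the cutoff $P_0$ and assembling the estimates built up in the discussion above. For $R \leq P_0$ the only subrange requiring attention is $T \leq \phi \leq (RQ)^{-1}$, since the complementary subrange $\phi \leq T$ is exactly the exceptional case the statement allows us to exclude; there the desired bound is precisely the conclusion of Lemma~\ref{lemma_3_3}, valid under \eqref{lemma_3.2_right_Q_1} and \eqref{minor_requir_vartheta_specific_1}. So the substance lies in the range $R \geq P_0$.

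For $R \geq P_0$ I would run the van der Corput argument developed above: Cauchy's inequality to replace $|S|$ by an average of $|S|^2$, covering $[\phi,2\phi]$ by $O(1+\phi/\kappa)$ intervals of length $2\kappa$, then the pointwise bound \eqref{M(alpha,H)_final_upper_bound} for $M(\alpha,H)$ together with \eqref{A(theta,R,H,P)_upper_boung_if_good} for $A(\theta,R,H,P)$ and the relation $\psi/\kappa \gg 1+\phi/\kappa$. This leads to
\begin{equation*}
    \sum(R,\phi,\pm) \ll \frac{\phi^{1/2}R^2\psi^{1/2}(\rho P)^{n-1/2+\varepsilon}}{H^{(n-1)/2}}\,E^{1/2},
\end{equation*}
with $E$ as in \eqref{E_definition}, so the target estimate follows once \eqref{if_E<<1} holds and $E \ll 1$. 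I would secure \eqref{if_E<<1} by the choice of $H$ in \eqref{H_sacond_time}; for this $H$ to be admissible, i.e.\ $\vartheta$-good so that \eqref{good} applies to it, the hypotheses \eqref{good_demand_2} and \eqref{good_demand_3} are needed. With that $H$ the terms $1$ and $(RMH^3\psi)^{n/2}$ in $E$ are $\ll 1$ by the definition \eqref{Q} of $Q$, treating $\phi \leq \phi_0$ and $\phi > \phi_0$ separately (using $\psi \ll P^{\varepsilon}(MHP^2)^{-1}$ in the first case and $\psi \ll \phi$ in the second).

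The delicate term is \eqref{E_final_term}. Examining it for $\phi \leq \phi_0$ and for $\phi > \phi_0$ shows it is harmless except when $R$ lies in the band $P_0 \leq R < P^{(3n-2)/(n^2-11n+8)+\varepsilon}M^{(37n^3-81n^2+57n-10)/((n-2)(n^2-11n+8))}$ and $\phi$ in the window $\phi_1 \leq \phi \leq \phi_2$ of \eqref{phi_1_definition}--\eqref{phi_2_definition}. On this residual window I would abandon the van der Corput bound and invoke Lemma~\ref{lemma3_2} instead: its lower restriction on $\phi$ follows from \eqref{intermediate_range_left}, which is automatic for $R \geq P_0$ by the definition \eqref{P_0_vartheta_definition} of $P_0$; its upper restriction on $\phi$ follows from \eqref{preliminary_right}, guaranteed by \eqref{Unexpected pregnancy}; and the remaining hypothesis \eqref{minor_requir_vartheta} of Lemma~\ref{lemma3_2}, after inserting $\phi \leq \phi_2$ and the upper bound on $R$, reduces to \eqref{the_critical_when_general}. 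Combining the resulting bound for $R \geq P_0$ with Lemma~\ref{lemma_3_3} for $R \leq P_0$ then yields the statement, the exceptional set being exactly $\{R \leq P_0,\ \phi \leq T\}$.

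I expect the main obstacle to be precisely this intermediate window $\phi_1 \leq \phi \leq \phi_2$: one must verify that all the region boundaries ($\phi_0,\phi_1,\phi_2$, the upper cutoff on $R$, and $P_0$, $Q$) are consistently ordered so that Lemma~\ref{lemma3_2} always applies there, and simultaneously that the several $\vartheta$-conditions needed to legitimise the various choices of $H$ are compatible — which is exactly what forces \eqref{minor_requir_vartheta_specific_1}, \eqref{good_demand_2}, \eqref{good_demand_3} and \eqref{the_critical_when_general} into the hypotheses of the lemma.
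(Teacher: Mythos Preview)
Your proposal is correct and follows essentially the same route as the paper: the case $R\leq P_0$ with $T\leq\phi\leq(RQ)^{-1}$ is Lemma~\ref{lemma_3_3}, and for $R\geq P_0$ the paper uses the very van der Corput machinery you describe (Cauchy, the $\kappa$-averaging, \eqref{M(alpha,H)_final_upper_bound} and \eqref{A(theta,R,H,P)_upper_boung_if_good}), choosing $H$ as in \eqref{H_sacond_time} to force \eqref{if_E<<1} and $E\ll1$ except on the residual window $\phi_1\leq\phi\leq\phi_2$, where Lemma~\ref{lemma3_2} is invoked exactly as you outline. Your identification of which hypotheses (\eqref{lemma_3.2_right_Q_1}, \eqref{Unexpected pregnancy}, \eqref{minor_requir_vartheta_specific_1}, \eqref{good_demand_2}, \eqref{good_demand_3}, \eqref{the_critical_when_general}) enter at which step matches the paper's account precisely.
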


We proceed to cover the bulk of the minor arcs by sets of the form
\begin{equation*}
    I(a,q)=\left\{\alpha=\frac{a}{q}+\nu: \nu_0 \leq |\nu| \leq \frac{1}{qQ} \right\},
\end{equation*}
where 
\begin{equation*}
    \nu_0=
    \begin{cases}
        T,\quad &\text{if }q \leq P_0,\\
        \frac{1}{P^n},\quad &\text{otherwise. }
    \end{cases}
\end{equation*}
For the interval $[\frac{a}{q}-\frac{1}{P^n}, \frac{a}{q}+\frac{1}{P^n}],$ by the trivial estimation, we have
\begin{equation*}
    \sum_{q \leq Q}\sum_{\substack{a \leq q\\[3pt](a,q)=1}}\int_{\frac{a}{q}-\frac{1}{P^n}}^{\frac{a}{q}+\frac{1}{P^n}}|S(\alpha)| d\alpha \leq Q^2,
\end{equation*}
which is $o(P^{n-3-\varepsilon}M^{\frac{-8n^2+12n+1}{n-2}}).$
By dyadic method we also have
\begin{equation*}
    \sum_{q \leq Q}\sum_{\substack{a \leq q\\[3pt](a,q)=1}}\int_{I(a,q)}|S(\alpha)| d\alpha \leq \mathcal{L}^2\sum(R,\phi,\pm),
\end{equation*}
for some $R\leq Q,$ some $\phi \leq (RQ)^{-1},$ and some choice of $\pm.$ Hence the Lemma \ref{contribution_minor_normal} follows from the Lemma \ref{lemma_3_7}.

\section{The Truncated Singular Series}

In this section we give the proof of Lemma \ref{series_lower_bound}. Let $\Delta(C)$ be the greatest common divisor of all the $n \times n$ subdeterminants of the $n \times \frac{1}{2}n(n+1)$ matrix formed from the coefficients of $C.$ This is introduced by Davenport \cite{davenport2005analytic} and denoted by $h(C)$ there. It is invariant under any unimodular change of variables, and $\Delta(C)=0$ if and only if $C$ is degenerate. In \cite{lloyd1975bounds}, Lloyd shows that $\Lambda_n(C) \ll M^{n-1}$ if $C$ is degenerate. So we may assume that $\Delta(C)\neq 0.$ By definition one has 
\begin{equation*}
    0 < \Delta(C) \ll M^n.
\end{equation*}

For positive integer $k$, we let
\begin{equation*}
    \rho(p^k):= \# \{ \boldsymbol {x} \in (\mathbb{Z}/p^k\mathbb{Z})^{n} : C(\boldsymbol {x})\equiv 0\ (\text{mod} \ p^k)\},
\end{equation*}
and $\rho^{*}(p^k)$ for the number of the set of non-singular solutions modular $p^k.$

Recalling the definition of $\mathfrak{S}(R),$ we have
\begin{equation*}
    \mathfrak{S}(P_0)=\sum_{q \leq P_0} A(q),
\end{equation*}
in which
\begin{equation}
    \label{A(q)_definition}
    A(q) =\sum_{\substack{1 \leq a \leq q\\[3pt](a,q)=1}} \frac{S(a,q)}{q^n},
\end{equation}
and $S(a,q)$ is given in \eqref{S(a,q)_definition}. It is well-known that
\begin{equation*}
    \sum_{i=0}^{k}A(p^k)= \frac{\rho(p^k)}{p^{k(n-1)}}.
\end{equation*}
We now define the truncated Euler product
\begin{equation*}
    S(P_0) :=\prod_{p \leq P_0}\sum_{i=0}^{k(p)}A(p^i),
\end{equation*}
where $k(p)$ is given by (7.4) of \cite{browning2009least}. The following Lemma gives a uniform lower bound for this quantity.
\begin{lemma}
    Let $\varepsilon>0.$ Then we have
    \begin{equation*}
        S(P_0) \gg M^{-6n-\varepsilon}.
    \end{equation*}
\end{lemma}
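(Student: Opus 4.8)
The plan is to write $S(P_0)$ as the product of its local factors and to bound each factor from below, separating the primes dividing $\Delta(C)$ from those that do not. Set $\sigma_p:=\sum_{i=0}^{k(p)}A(p^i)$. The identity $\sum_{i=0}^{k}A(p^{i})=\rho(p^{k})/p^{k(n-1)}$ recorded above gives $\sigma_p=\rho(p^{k(p)})/p^{k(p)(n-1)}>0$, so that $S(P_0)=\prod_{p\le P_0}\sigma_p$ and everything reduces to lower bounds for the individual $\sigma_p$ together with control on how many of them can be small.

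First I would dispose of the primes $p\nmid\Delta(C)$. For such $p$ the condition $\Delta(C)\not\equiv0\pmod p$ means precisely that $C$ is non-degenerate modulo $p$, so the singular locus of $C=0$ over $\mathbb{F}_p$ has small dimension and a standard Lang--Weil/Deligne count gives $\rho^{*}(p)\ge\rho(p)-O_n(p^{n-3/2})=p^{n-1}+O_n(p^{n-3/2})$, with implied constants depending only on $n$. Since a nonsingular solution modulo $p$ lifts by Hensel's lemma, $\sigma_p=\rho(p^{k(p)})/p^{k(p)(n-1)}\ge\rho^{*}(p)/p^{n-1}\ge 1-c_n p^{-3/2}$ once $p>c_n'$; for the finitely many remaining primes $p\nmid\Delta(C)$ one has $\sigma_p\gg_n 1$, since a cubic form in $n\ge14$ variables always has a nonsingular $p$-adic zero (this is where the hypothesis $n\ge14$ enters; cf.\ \cite{bernert2023}). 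Hence $\prod_{p\nmid\Delta(C)}\sigma_p\gg_n 1$.

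The main work is the contribution of the ramified primes $p\mid\Delta(C)$, where the estimate of \cite{browning2009least} is deficient and one must instead invoke the first part of Lemma~16 of \cite{bernert2023}. This provides a lower bound of the form $\sigma_p\gg_n p^{-6\,v_p(\Delta(C))}$ for each $p\mid\Delta(C)$, the exponent being calibrated so that the total loss is $M^{6n}$. Multiplying over $p\mid\Delta(C)$ and using $\prod_{p}p^{v_p(\Delta(C))}=\Delta(C)$ together with $0<\Delta(C)\ll M^{n}$ and $\omega(\Delta(C))\ll\mathcal{L}$, one obtains $\prod_{p\mid\Delta(C)}\sigma_p\gg_n M^{-6n-\varepsilon}$, where the $M^{\varepsilon}$ absorbs the loss coming from the number of distinct prime divisors. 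Combining with the previous paragraph gives $S(P_0)=\prod_{p\le P_0}\sigma_p\gg M^{-6n-\varepsilon}$, as claimed.

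The hard part is the ramified-prime estimate: one must make the exponent in the lower bound for the local densities explicit and uniform in $M$, and check that Lemma~16 of \cite{bernert2023} genuinely applies in the present normalisation, in particular that $k(p)$ as defined via (7.4) of \cite{browning2009least} is large enough that the truncated factor $\sigma_p$ already equals the full $p$-adic density of $C=0$. A secondary point requiring care is ensuring that the good-prime product contributes no negative power of $M$, i.e.\ that the small primes $p\nmid\Delta(C)$ really do contribute a constant depending only on $n$; this again rests on local solubility of cubic forms in $n\ge14$ variables.
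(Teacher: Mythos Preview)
The paper does not actually prove this lemma: its entire proof is the single line ``This is the Lemma 16 of \cite{bernert2023}.'' So there is no argument in the paper to compare your sketch against beyond the bare citation.

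Your proposal, however, has a genuine circularity at its core. For the ramified primes $p\mid\Delta(C)$ you write that ``one must instead invoke the first part of Lemma~16 of \cite{bernert2023}'' to obtain $\sigma_p\gg_n p^{-6\,v_p(\Delta(C))}$. But according to the paper, the first part of Lemma~16 of \cite{bernert2023} \emph{is} the global bound $S(P_0)\gg M^{-6n-\varepsilon}$ --- precisely the statement you are asked to prove. So your argument either assumes what it is meant to establish, or silently replaces the global conclusion of Bernert's lemma by a local per-prime estimate whose proof you have not supplied. The overall scaffolding (split $p\nmid\Delta(C)$ from $p\mid\Delta(C)$; use Lang--Weil/Deligne plus Hensel for the good primes; bound the bad primes individually and multiply using $\Delta(C)\ll M^n$) is the standard shape of such arguments and the numerology $\Delta(C)^{-6}\ll M^{-6n}$ fits the target exponent, but all of the actual content of the lemma lives in the ramified-prime estimate, and that is exactly the step you have outsourced back to the lemma itself.

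A secondary point: your treatment of the unramified primes is also not quite self-contained. The claim that $\sigma_p\gg_n 1$ for the finitely many small primes $p\nmid\Delta(C)$ rests on the existence of a nonsingular $p$-adic zero, which for $n\ge 10$ is a theorem of Davenport--Lewis type; you should cite this rather than assert it. And the assertion that $k(p)$ from (7.4) of \cite{browning2009least} is large enough that $\sigma_p$ already equals the full $p$-adic density is something you flag as needing checking but do not check.
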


This is the Lemma 16 of \cite{bernert2023}.

Define
\begin{equation*}
    \mathscr{L}(P_0):=\left\{q \in \mathbb{N}: q>P_0,\ p^i \mid q \Rightarrow p \leq P_0 \text{ and }i \leq k(p) \right\}.
\end{equation*}
Then we have
\begin{equation*}
    R(P_0) := |\mathfrak{S}(P_0)-S(P_0)| \leq \sum_{q \in \mathscr{L}(P_0)} |A(q)|,
\end{equation*}
where $A(q)$ is given by \eqref{A(q)_definition}. For the upper bound of $R(P_0),$ we have the following results.

\begin{lemma}
\label{lemma_4_2}
    Assume that $C$ is $\infty$-good. Then for $n \geq 14,$ we have
    \begin{equation*}
        R(P_0) \ll M^{\frac{n}{6}}P_0^{2-\frac{n}{6}+\varepsilon}.
    \end{equation*}
\end{lemma}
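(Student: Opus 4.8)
The plan is to estimate $R(P_0)$ by bounding $|A(q)|$ pointwise for each $q$ in the relevant range and then summing a convergent series. We already have $R(P_0)\le\sum_{q\in\mathscr{L}(P_0)}|A(q)|$, and every $q$ appearing here satisfies $q>P_0$. Since $P_0=M^{eoP_0}$ with $eoP_0>1$ (indeed $eoP_0$ is large, cf.\ \eqref{eop_definition}), every such $q$ satisfies $q\ge M$, which is precisely the range in which Lemma~\ref{lemma_S(a,q)_upper_bound} applies. So the first step is to record that it suffices to bound $\sum_{q>P_0}|A(q)|$, and that on this set the Weyl-type estimate for complete exponential sums is available.

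Because $C$ is $\infty$-good it is $\vartheta$-good for every finite $\vartheta$, and the right-hand side of \eqref{good_demand_1} is unbounded in $\vartheta$; hence the hypotheses of Lemma~\ref{lemma_S(a,q)_upper_bound} are met and $S(a,q)\ll M^{n/6}q^{5n/6+\varepsilon}$ for all $q\ge M$ and all $a$ with $(a,q)=1$. Recalling \eqref{A(q)_definition} and summing trivially over the $\phi(q)\le q$ admissible residues,
$$|A(q)|\le q^{-n}\sum_{(a,q)=1}|S(a,q)|\le\phi(q)\,q^{-n}\max_{(a,q)=1}|S(a,q)|\ll M^{n/6}q^{1-\frac n6+\varepsilon}.$$
I would stress that this bound must be used directly, and not via the multiplicativity of $q\mapsto A(q)$ together with its per prime power form, since the latter route would introduce a spurious factor of $M$ raised to the power $(n/6)$ times the number of prime factors of $q$.

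Finally, for $n\ge14$ and $\varepsilon$ sufficiently small one has $\frac n6-1-\varepsilon>1$, so the series $\sum_{q>P_0}q^{1-n/6+\varepsilon}$ converges, and comparing its tail with $\int_{P_0}^{\infty}x^{1-n/6+\varepsilon}\,dx$ gives
$$R(P_0)\le\sum_{q>P_0}|A(q)|\ll M^{n/6}\sum_{q>P_0}q^{1-\frac n6+\varepsilon}\ll_{n,\varepsilon}M^{n/6}P_0^{2-\frac n6+\varepsilon},$$
which is the assertion of the Lemma (the final $\varepsilon$ absorbing the one from the pointwise bound in the usual way). There is essentially no obstacle here: the substantive ingredient, the estimate $S(a,q)\ll M^{n/6}q^{5n/6+\varepsilon}$, is already in hand, and what remains is the bookkeeping above. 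The only points requiring care are that $P_0\ge M$ (so the range of Lemma~\ref{lemma_S(a,q)_upper_bound} is always met on $\mathscr{L}(P_0)$) and that the pointwise bound must be summed directly; retaining the smoothness constraints defining $\mathscr{L}(P_0)$ and summing via an Euler product would also work but is unnecessary, since the exponent $1-n/6+\varepsilon$ is already less than $-1$.
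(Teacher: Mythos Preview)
Your proof is correct and follows essentially the same approach as the paper: use Lemma~\ref{lemma_S(a,q)_upper_bound} to get $|A(q)|\ll M^{n/6}q^{1-n/6+\varepsilon}$, then sum over $q>P_0$. You include more justification than the paper does (in particular the check that $P_0\ge M$ so that Lemma~\ref{lemma_S(a,q)_upper_bound} actually applies on the relevant range, and the verification that $\infty$-good ensures \eqref{good_demand_1}), but the argument is the same.
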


\begin{proof}
It follows from Lemma \ref{lemma_S(a,q)_upper_bound} that $A(q) \ll M^{\frac{n}{6}}q^{1-\frac{n}{6}+\varepsilon}.$ So we have
\begin{equation*}
    R(P_0) \ll \sum_{q> P_0}|A(q)| \ll \sum_{q> P_0}M^{\frac{n}{6}}q^{1-\frac{n}{6}+\varepsilon} \ll M^{\frac{n}{6}}P_0^{2-\frac{n}{6}+\varepsilon},
\end{equation*}
for $n\geq 14.$
\end{proof}

\begin{lemma}
    Assume that $C$ is $\vartheta$-good, for $\vartheta < \infty.$ Assume furthermore that $P_0 \ll M^{1+2\vartheta}$ and $\delta$ satisfies
    \begin{equation}
        \label{delta_condition}
        2<\delta<\frac{n}{6}, \quad \frac{2n}{n-6\delta}<1+2\vartheta.
    \end{equation}
    Then we have
    \begin{equation*}
        R(P_0) \ll M^{\frac{n\delta}{n-6\delta}}P_0^{2-\delta+\varepsilon}.
    \end{equation*}
\end{lemma}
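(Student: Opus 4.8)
The plan is to imitate the proof of Lemma~\ref{lemma_4_2}, replacing its input bound on $S(a,q)$ by one compatible with the constraint $H\le M^{\vartheta}$ that $\vartheta$-goodness imposes. The target is a pointwise estimate $|A(q)|\ll M^{\frac{n\delta}{n-6\delta}}q^{\,1-\delta+\varepsilon}$, uniform in $a$, valid for the prime powers $q=p^{v}$ with $v\le k(p)$, $p\le P_{0}$. Granting this together with the unconditional bound $|A(p)|\ll p^{\,1-n/2+\varepsilon}$ for $p\nmid\Delta(C)$ (which follows from the Weil--Deligne estimate for the nonsingular reduction, and from $S(a,p^{v})=0$ for $v\ge2$ in that case), one handles $R(P_{0})\le\sum_{q\in\mathscr{L}(P_{0})}|A(q)|$ by multiplicativity of $A$: write $q=q'q''$ with $q''\mid\prod_{p\mid\Delta(C)}p^{k(p)}$ and $q'$ composed of primes not dividing $\Delta(C)$. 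Since $0<\Delta(C)\ll M^{n}$ and $k(p)$ is as in (7.4) of \cite{browning2009least}, $q''\ll M^{O(n)}$, a fixed power of $M$ lying below $P_{0}=M^{eoP_{0}}$; hence $q'\ge P_{0}^{\eta}$ for some fixed $\eta>0$, so $\sum_{q'}|A(q')|\ll\sum_{q'\ge P_{0}^{\eta}}(q')^{1-n/2+\varepsilon}\ll P_{0}^{\eta(2-n/2)+\varepsilon}\ll P_{0}^{\,2-\delta+\varepsilon}$ (using $\delta<n/6<1+n/4$), while $\sum_{q''}|A(q'')|\ll M^{\frac{n\delta}{n-6\delta}+\varepsilon}$ over the boundedly many admissible $q''$. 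Multiplying yields $R(P_{0})\ll M^{\frac{n\delta}{n-6\delta}}P_{0}^{\,2-\delta+\varepsilon}$, where $\delta>2$ from \eqref{delta_condition} is precisely what makes the $q'$-sum converge.

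For the pointwise estimate, note first that for $q<M$ the trivial bound $|A(q)|\le q$ already gives it, since $\tfrac{n\delta}{n-6\delta}\ge1$; for $q\ge M$ I would return to the van der Corput inequality \eqref{S(alpha)_van_der_Corput} with $\rho=1$, $P=q$, $\theta=0$ (so $\psi=(MHq^{2})^{-1}$), which reads $|S(a,q)|^{2}\ll q^{2n+\varepsilon}\bigl(q^{-n/2}+M^{n/2}H^{n/2}q^{-n/2}+H^{-n}\bigr)$ for any $1\le H\le M^{\vartheta}$. Instead of the unconstrained optimum $H\asymp(q/M)^{1/3}$ — which forces $q\le M^{1+3\vartheta}$ and only recovers the exponent $n/6$ of Lemma~\ref{lemma_4_2} — I would take $H\asymp M^{-2\delta/(n-6\delta)}q^{2\delta/n}$ (and $H=1$ when this falls below $1$), chosen so that $H^{-n}=M^{2n\delta/(n-6\delta)}q^{-2\delta}$; using $2<\delta<n/6$ one checks that the first two terms of the bracket are $\ll M^{2n\delta/(n-6\delta)}q^{-2\delta}$ as well (the exponents $2\delta-\tfrac n2$ and $3\delta-\tfrac n2$ being negative and $\tfrac{n}{n-6\delta}>0$). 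Then $|S(a,q)|^{2}\ll M^{2n\delta/(n-6\delta)}q^{2n-2\delta+\varepsilon}$, whence $|A(q)|\le q^{1-n}\max_{a}|S(a,q)|\ll M^{\frac{n\delta}{n-6\delta}}q^{1-\delta+\varepsilon}$ — provided the chosen $H$ genuinely satisfies $H\le M^{\vartheta}$ at the scales of $q$ needed. This is where the remaining hypotheses are spent: $P_{0}\ll M^{1+2\vartheta}$ and $\tfrac{2n}{n-6\delta}<1+2\vartheta$ from \eqref{delta_condition} together keep $M^{-2\delta/(n-6\delta)}q^{2\delta/n}\le M^{\vartheta}$ throughout the range $q\ll M^{O(n)}P_{0}$ occupied by the prime powers in $\mathscr{L}(P_{0})$.

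The main obstacle is precisely this last admissibility check: verifying that the sub-optimal differencing length $H\asymp M^{-2\delta/(n-6\delta)}q^{2\delta/n}$ never exceeds $M^{\vartheta}$ across the whole range of relevant $q$ — equivalently, that the largest prime-power factor $p^{k(p)}$ arising in $\mathscr{L}(P_{0})$ stays below the threshold $M^{\,n\vartheta/(2\delta)+n/(n-6\delta)}$ dictated by $H\le M^{\vartheta}$. Carrying this out requires the explicit description of $k(p)$ from \cite{browning2009least} together with the bound $0<\Delta(C)\ll M^{n}$; the surrounding summation over $\mathscr{L}(P_{0})$ then follows the pattern of Lemma~\ref{lemma_4_2} and is routine.
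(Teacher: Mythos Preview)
Your approach diverges from the paper's and contains a genuine gap.

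The central problem is your $q'q''$ splitting. You invoke Weil--Deligne for $p\nmid\Delta(C)$, but in this paper $\Delta(C)$ is \emph{not} the discriminant of the projective hypersurface: it is Davenport's $h$-invariant, the gcd of the $n\times n$ minors of the $n\times\binom{n+1}{2}$ coefficient matrix, which detects whether $C$ is degenerate (equivalent to a form in fewer variables), not whether the hypersurface $C=0$ is smooth. Consequently $p\nmid\Delta(C)$ does \emph{not} imply nonsingular reduction, so neither the bound $|A(p)|\ll p^{1-n/2+\varepsilon}$ nor the vanishing $S(a,p^{v})=0$ for $v\ge2$ follows. If instead you used the true discriminant, its degree in the coefficients is $n\cdot 2^{\,n-1}$, so it is only bounded by $M^{n2^{n-1}}$; for $n=14$ this exponent is $114688$, dwarfing $eoP_0\approx 583$, and your claim that $q''$ lies below $P_0$ (hence $q'\ge P_0^{\eta}$) collapses. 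Either way, the decomposition does not deliver a convergent sum over $q'$.

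The paper proceeds quite differently and never appeals to Deligne-type input. It keeps the \emph{optimal} choice $H\asymp(q/M)^{1/3}$ from Lemma~\ref{lemma_S(a,q)_upper_bound}, giving $A(q)\ll M^{n/6}q^{1-n/6+\varepsilon}$, and observes that for $q$ in the window $M^{A}\le q\le M^{B}$ with $A=\tfrac{n}{n-6\delta}+\varepsilon$ and $B=1+2\vartheta$ this simplifies to $A(q)=O(q^{1-\delta})$ \emph{uniformly} in $M$. The hypothesis $\tfrac{2n}{n-6\delta}<1+2\vartheta$ is exactly $2A<B$, and together with $P_0\ll M^{B}$ it permits (by the argument of Lemma~13 of \cite{browning2009least}) a coprime factorisation $q=q_1\cdots q_t\,q_{t+1}$ of every $q\in\mathscr{L}(P_0)$ with each $q_i$ ($i\le t$) in the window $[M^A,M^B]$ and $q_{t+1}<M^A$. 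Multiplicativity of $A$ then gives $|A(q)|\ll q_{t+1}\cdot q_0^{\,1-\delta+\varepsilon}$ with $q_0=q_1\cdots q_t$, and summing yields
\[
R(P_0)\ \ll\ M^{A}\sum_{q_{t+1}<M^{A}}\ \sum_{q_0>P_0/q_{t+1}} q_0^{\,1-\delta+\varepsilon}\ \ll\ M^{A\delta}\,P_0^{\,2-\delta+\varepsilon},
\]
which is the stated bound. Your suboptimal-$H$ pointwise estimate is correct on its range of validity, but it is not the mechanism the paper uses, and your attempt to extend it to all of $\mathscr{L}(P_0)$ via the $q'q''$ split fails for the reason above; the coprime factorisation into moderate-sized blocks is the missing idea.
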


\begin{proof}
We follow the approach of Lemma 13 in \cite{browning2009least}. Let $q$ be an integer in the interval
\begin{equation}
\label{q_i_range}
    M^{\frac{n}{n-6\delta}+\varepsilon} \leq q \leq M^{1+2\vartheta}.
\end{equation}
In the proof of Lemma \ref{lemma_4_2}, we get $A(q) \ll M^{\frac{n}{6}}q^{1-\frac{n}{6}+\varepsilon}.$ If $q$ satisfies \eqref{q_i_range}, then we have 
\begin{equation*}
    A(q)=O(q^{1-\delta}),
\end{equation*}
uniformly in $M.$

Let
\begin{equation*}
    A= \frac{n}{n-6\delta}+\varepsilon,\quad B=1+2\vartheta.
\end{equation*}
One has $2A <B.$ And for each $q \in \mathscr{L}(P_0),$ Browning and et al \cite{browning2009least} prove that there is a factorisation
\begin{equation*}
    q=q_1...q_tq_{t+1},
\end{equation*}
with $(q_1, q_j)=1$ for each $1 \leq i < j \leq t+1,$ where $q_i$ satisfies \eqref{q_i_range} for $1 \leq i \leq t$ and $q_{t+1}< M^{A}.$ Write $q=q_0q_{t+1},$ with $q_0=q_1...q_t,$ then one has
\begin{equation*}
    R(P_0) \ll M^A \sum_{q_{t+1}<M^A}\sum_{q_0>\frac{P_0}{q_{t+1}}}q_0^{1-\delta+\varepsilon} \ll M^{A\delta}P_0^{2-\delta+\varepsilon}.
\end{equation*}
Then the Lemma follows.

In this Lemma we correct an error in the treatment of Lemma 13 in \cite{browning2009least}, where the exponent of $M$ should be $\frac{n\delta}{n-8\delta}+\varepsilon$ rather than $\frac{n}{n-8\delta}+\varepsilon.$
\end{proof}

\section{The Major Arcs}

In this section we will give the proof of Lemma \ref{contribution_major}. For $\alpha \in \mathfrak{M},$ there exist coprime integers $a, q$ such that $1 \leq a < q \leq P_0,$ and
\begin{equation*}
    \alpha=\frac{a}{q}+ z, \quad |z| \leq T.
\end{equation*}
Let $D$ be the smallest real number exceeding 1 such that $\mathscr{B} \in [-D, D]^n.$ Then we have
\begin{equation}
    1 \leq D \ll 1+|\boldsymbol{z}|,
\end{equation}
and
\begin{equation}
    S(\alpha)=\sum_{\boldsymbol{r}(\text{mod }q)} e_q(aC(\boldsymbol{r}))\sum_{\substack{\boldsymbol{y} \in \mathbb{Z}^n\\[3pt]\boldsymbol{r}
    +q\boldsymbol{y} \in P\mathscr{B}}} e(zC(\boldsymbol{r}
    +q\boldsymbol{y})).
\end{equation}

\begin{lemma}
    \label{lemma_5_1}
    Let $\alpha=a/q+ z \in \mathfrak{M}(a,q),$ for coprime integers $a,q$ satisfying the inequality $0 \leq a<q \leq P_0,$ where $P_0$ is given by \eqref{P_0_vartheta_definition}. Assume that  
    \begin{equation}
    \label{Not very important}
        P_0TM^{\frac{3}{2}}P^2 \ll 1.
    \end{equation}
    Then either $\Lambda_n(C)=O(1),$ or else we have
    \begin{equation*}
        S(\alpha)=\frac{S(a,q)I_P(z)}{q^n}+O\left(q(\rho P)^{n-1}\right),
    \end{equation*}
    where 
    \begin{equation*}
        S(a,q)=\sum_{\boldsymbol{r}(\text{mod }q)} e_q(aC(\boldsymbol{r})),
    \end{equation*}
    and
    \begin{equation*}
        I_P(z)=\int_{P\mathscr{B}} e(zC(\boldsymbol{x})) d\boldsymbol{x}.
    \end{equation*}
\end{lemma}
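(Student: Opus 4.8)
The plan is to split the generating function $S(\alpha)$ over the complete residue system modulo $q$ exactly as displayed just before the lemma, namely
\begin{equation*}
    S(\alpha)=\sum_{\boldsymbol{r}(\mathrm{mod}\ q)} e_q(aC(\boldsymbol{r}))\sum_{\substack{\boldsymbol{y}\in\mathbb{Z}^n\\[3pt]\boldsymbol{r}+q\boldsymbol{y}\in P\mathscr{B}}} e(zC(\boldsymbol{r}+q\boldsymbol{y})),
\end{equation*}
and then to replace the inner sum over the lattice $\boldsymbol{r}+q\mathbb{Z}^n$ by the corresponding integral. Write $\boldsymbol{x}=\boldsymbol{r}+q\boldsymbol{y}$ and think of the inner sum as a Riemann sum for $q^{-n}\int_{P\mathscr{B}} e(zC(\boldsymbol{x}))\,d\boldsymbol{x}=q^{-n}I_P(z)$ with mesh $q$ in each coordinate. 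The error in this approximation is controlled by the total variation of the integrand $\boldsymbol{x}\mapsto e(zC(\boldsymbol{x}))$ over the box $P\mathscr{B}$, which by the mean value theorem is governed by $|z|$ times the size of the gradient $\nabla C$ on $P\mathscr{B}$. Summing the $q^n$ contributions from the residue classes $\boldsymbol{r}$ and factoring out $|S(a,q)|\le q^n$ will produce the claimed shape $q^{-n}S(a,q)I_P(z)+O(q(\rho P)^{n-1})$.

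Concretely, I would first record the size of the relevant quantities on $P\mathscr{B}$: since $\mathscr{B}=\mathscr{B}(\boldsymbol{z},\rho)$ with $\rho\asymp M^{-2-5/(n-2)}$ and $|\boldsymbol{z}|\ll M^{1/(n-2)}$, the box $P\mathscr{B}$ has side-length $\asymp\rho P$ in each coordinate and lies in $|\boldsymbol{x}|\ll PM^{1/(n-2)}$. Hence $\nabla C(\boldsymbol{x})\ll M\,(PM^{1/(n-2)})^2$ on $P\mathscr{B}$, and the one-dimensional variation of $e(zC(\cdot))$ in each coordinate direction across the box is $\ll |z|\cdot M P^2 M^{2/(n-2)}\cdot\rho P$. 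The standard comparison of a sum over $\boldsymbol{r}+q\mathbb{Z}^n\cap P\mathscr{B}$ with the integral, done one coordinate at a time, gives
\begin{equation*}
    \sum_{\substack{\boldsymbol{y}\in\mathbb{Z}^n\\[3pt]\boldsymbol{r}+q\boldsymbol{y}\in P\mathscr{B}}} e(zC(\boldsymbol{r}+q\boldsymbol{y}))=\frac{1}{q^n}I_P(z)+O\!\left(\frac{(\rho P)^{n-1}}{q^{n-1}}\Bigl(1+|z|\,M P^2 M^{2/(n-2)}\rho\,P\cdot q\Bigr)\right),
\end{equation*}
where the factor $(\rho P/q)^{n-1}$ counts the boundary lattice slices and the parenthesised term is $1$ plus the variation across one slice. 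Multiplying by $e_q(aC(\boldsymbol{r}))$, summing over the $q^n$ residues $\boldsymbol{r}$, and using the trivial bound $q^n$ for the exponential sum over $\boldsymbol{r}$ in the error term yields $S(\alpha)=q^{-n}S(a,q)I_P(z)+O\bigl(q(\rho P)^{n-1}(1+|z|MP^3M^{3/(n-2)}\rho q)\bigr)$.

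It then remains to check that the hypothesis \eqref{Not very important}, $P_0TM^{3/2}P^2\ll1$, together with $q\le P_0$, $|z|\le T$ and $\rho\asymp M^{-2-5/(n-2)}$, forces the secondary term $|z|MP^3M^{3/(n-2)}\rho q$ to be $O(1)$, so that the error collapses to the stated $O(q(\rho P)^{n-1})$. This is where the precise shape of $\rho$ and the clause ``either $\Lambda_n(C)=O(1)$'' are used: one has $|z|MP^3M^{3/(n-2)}\rho q\ll T P_0 M^{3/(n-2)-1-5/(n-2)}P^3\ll TP_0 M^{-1-2/(n-2)}P^3$, and since $M^{-1-2/(n-2)}\ll M^{3/2}$ trivially, the condition \eqref{Not very important} does the job provided $P$ is not too large relative to $M$; the degenerate possibility $\Lambda_n(C)=O(1)$ absorbs the remaining edge case where the box $P\mathscr{B}$ contains no integer point or $P$ is bounded. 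I expect the main obstacle to be bookkeeping the coordinate-by-coordinate Riemann-sum comparison cleanly enough that the boundary contribution is genuinely $O((\rho P/q)^{n-1})$ per residue class rather than something larger, and tracking the exponents of $M$ and $P$ carefully so that \eqref{Not very important} is exactly the inequality needed — the analytic content is routine, but the exponent arithmetic tying $\rho$, $T$, $P_0$ and $P$ together is delicate and is precisely the point flagged in the introduction about Bernert's choice of $P_0$ failing the analogous condition.
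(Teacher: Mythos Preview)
Your overall strategy --- splitting $S(\alpha)$ over residue classes modulo $q$ and replacing the inner sum by $q^{-n}I_P(z)$ via a Riemann-sum comparison --- is exactly what the paper does (it defers to Lemma~5 of \cite{browning2009least}). However, your error analysis has a genuine gap. A first-order mean-value comparison produces, per residue class, an error of the shape $(\rho P/q)^{n-1}\bigl(1+\rho P\,|z|\,M(DP)^2\bigr)$ (your displayed formula carries an additional spurious factor $q$, but even without it the problem persists). After summing trivially over the $q^n$ residues this gives $q(\rho P)^{n-1}+(\rho P)^n|z|M(DP)^2$, and the second term is \emph{not} controlled by the hypothesis \eqref{Not very important}: you would need $\rho P\,|z|\,M(DP)^2\ll 1$, whereas with $P=M^{e(n)}$ and $e(n)$ on the order of $10^3$ this quantity is astronomically large. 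Your own hedge ``provided $P$ is not too large relative to $M$'' is precisely where the argument collapses --- in this paper $P$ \emph{is} very large relative to $M$, and the edge case cannot be absorbed into the clause $\Lambda_n(C)=O(1)$.

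The paper's proof asserts that the correct sufficient condition is $q|z|M(DP)^2=O(M^{-\eta})$ --- the oscillation of $e(zC)$ over a \emph{single} lattice step of size $q$, not across the whole box of width $\rho P$ --- and this is what \eqref{Not very important} actually delivers, since $q|z|M(DP)^2\ll P_0TM^{1+2/(n-2)}P^2\le P_0TM^{3/2}P^2\ll 1$ for $n\ge 14$. To extract the stated error $O(q(\rho P)^{n-1})$ from this weaker input you cannot stop at first order. The key observation is that once $q|z|M(DP)^2\ll M^{-\eta}$, the function $\boldsymbol{y}\mapsto e(zC(\boldsymbol{r}+q\boldsymbol{y}))$ has \emph{all} partial derivatives bounded by negative powers of $M$: each differentiation in $\boldsymbol{y}$ introduces a factor dominated by $q|z|M(DP)^2$, and since $C$ is cubic the higher derivatives of $C$ only make things smaller. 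Repeated partial summation (equivalently, Euler--Maclaurin to sufficiently high order, or Poisson summation with enough integrations by parts) then kills the bulk contribution entirely and leaves only the genuine boundary term $(\rho P/q)^{n-1}$ per residue class. You should consult the proof of Lemma~5 in \cite{browning2009least} for how this is executed.
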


\begin{proof}
Same as Lemma 5 of \cite{browning2009least}, it is sufficient to note that by \eqref{Not very important}, we have
    \begin{equation*}
        \begin{split}
            q|z|M(DP)^2 \ll P_0 T M P^2 ,
        \end{split}
    \end{equation*}
    which is $O(M^{-\eta})$ for a certain value of $\eta >0$ when $n \geq 14.$
    Substituting the definition \eqref{P_0_vartheta_definition} of $P_0$ and \eqref{u_normal_definition} of $T$, \eqref{Not very important} is equivalent with
    $$
    e(n) \geq
    \begin{cases}
    904.479, \quad &\text{if } n =14,\\
    eoP_0+\frac{24n^2-13n+2}{2(n-2)}+\varepsilon, \quad &\text{if }n\geq 15,
    \end{cases}
    $$
    which has been recorded as \eqref{The child I want to abandon}.
\end{proof}

Following the approach of section 5 in \cite{browning2009least}, with Lemma \ref{lemma_5_1} we deduce that
\begin{equation*}
        \int_{\mathfrak{M}} S(\alpha) d\alpha=
        \mathfrak{S}(P_0)\mathfrak{I}(TP^3)P^{n-3}
        +O\left((\rho P)^{n-1}P_0^{3}T\right),
\end{equation*}
then
\begin{equation}
    \begin{split}
        \int_{\mathfrak{M}} S(\alpha) d\alpha=
        &\mathfrak{S}(P_0)V(0)P^{n-3}\\
        &+O\left((\rho P)^{n-1}P_0^{3}T\right)\\
        &+O\left(|\mathfrak{S}(P_0)|\rho ^{n-2}M^{3+\frac{11}{n-2}}(TP^3)^{-\frac{1}{2}}P^{n-3}\right),
    \end{split}
\end{equation}
where $V(0) \gg \rho^{n-1}M^{-1-\frac{2}{n-2}}.$

For $n \geq 14,$ we assume that $C$ is $\vartheta$-good, where $\vartheta$ satisfies \eqref{good_demand_1}. Then invoking the trivial upper bound of $S(a,q)$ and Lemma \ref{lemma_S(a,q)_upper_bound} respectively, we have
\begin{equation*}
    \begin{split}
        \mathfrak{S}(P_0) 
        &\ll \sum_{q \leq P_0} q^{1-n}|S(a,q)|\\ 
        &\ll \sum_{1 \leq q \leq M} q^{1-n}|S(a,q)|+\sum_{q >M} q^{1-n}|S(a,q)|\\
        &\ll \sum_{1 \leq q \leq M} q+\sum_{q >M}M^{\frac{n}{6}}q^{1-\frac{n}{6}+\varepsilon}\\
        &\ll M^{2+\varepsilon}.
    \end{split}
\end{equation*}
Taking $\mathfrak{I}=V(0)$, then the Lemma \ref{contribution_major} follows.

\subsection*{Acknowledgements}
The authors would like to thank Professor T.D.Browning for his very helpful suggestions and comments.
%We also would like to thank the anonymous referees for his or her comments and suggestions.
%We would like to thank Prof.T.D.Browning for catching an error in an earlier version of the paper.
This work was supported by National Natural Science Foundation of China (Grant No.12171311).

%%%%%%%%%%% To ease editing, use normal size for the references:

\normalsize

\end{document}